\title[Order and Disorder in Energy Minimization]{Order and Disorder in Energy Minimization}
\author[Henry Cohn]{Henry Cohn\thanks{Microsoft Research New England, One Memorial Drive,
Cambridge, MA 02142, USA. E-mail: cohn@microsoft.com.}}
\newcommand{\R}{\mathbb{R}}
\newcommand{\C}{\mathbb{C}}
\newcommand{\Z}{\mathbb{Z}}
\newcommand{\F}{\mathbb{F}}
\newcommand{\Ps}{\mathbb{P}}
\newcommand{\Pol}{\mathcal{P}}
\newcommand{\vol}{\mathop{\textup{vol}}}
\newtheorem{theorem}{Theorem}[section]
\newtheorem{lemma}[theorem]{Lemma}
\newtheorem{conjecture}[theorem]{Conjecture}
\theoremstyle{definition}
\numberwithin{equation}{section}
\begin{document}

\setcounter{page}{2416}

\begin{abstract}
How can we understand the origins of highly symmetrical objects? One
way is to characterize them as the solutions of natural optimization
problems from discrete geometry or physics.  In this paper, we explore
how to prove that exceptional objects, such as regular polytopes or the
$E_8$ root system, are optimal solutions to packing and potential
energy minimization problems.
\end{abstract}

\begin{Classification}
Primary 05B40, 52C17; Secondary 11H31.
\end{Classification}

\begin{keywords}
Symmetry, potential energy minimization, sphere packing, $E_8$, Leech
lattice, regular polytopes, universal optimality.
\end{keywords}

\maketitle

%\addcontentsline{toc}{author}{Henry Cohn}%
%\addcontentsline{toc}{chapter}{Order and Disorder in Energy Minimization}%
%\dummytableofcontents

\section{Introduction}

\subsection{Genetics of the regular figures}

Symmetry is all around us, both in the physical world and in
mathematics.  Of course, only a few of the many possible symmetries are
ever actually realized, but we see more of them than we seemingly have
any right to expect: symmetry is by its very nature delicate, and
easily disturbed by perturbations.  It is no great surprise to see
carefully designed, symmetrical artifacts, but it is remarkable that
nature can ever produce similar effects robustly, for example in
snowflakes. Any occurrence of symmetry not deliberately imposed demands
an explanation.

L\'aszl\'o Fejes T\'oth proposed to seek the origins of symmetry in
optimization problems.  He referred to the \emph{genetics of the
regular figures}, in which ``regular arrangements are generated from
unarranged, chaotic sets by the ordering effect of an economy
principle, in the widest sense of the word'' \cite{FT}.  It is not
enough simply to classify the possible symmetries; we must go further
and identify the circumstances in which they arise naturally.

Over the last century mathematicians have made enormous progress in
identifying possible symmetry groups.  We have classified the simple
Lie algebras and finite simple groups, and although there is much left
to learn about group theory and representation theory, our collective
knowledge is both extensive and broadly applicable. Unfortunately, our
understanding of the genetics of the regular figures lags behind.  Much
is known, but far more remains to be discovered, and many natural
questions seem totally intractable.

Optimization provides a framework for this problem.  How much symmetry
and order should we expect in the solution of an optimization problem?
It is natural to guess that the solutions of a highly symmetric problem
will inherit the symmetry of the problem, but that is not always the
case.  For a toy example, consider the Steiner tree problem for a
square, i.e., how to connect all four vertices of a square to each
other via curves with minimal total length.  The most obvious guess
connects the vertices by an X, which displays all the symmetries of the
square, but it is suboptimal. Instead, in the optimal solutions the
branches meet in threes at $120^\circ$ angles (this is a
two-dimensional analogue of the behavior of soap films):
\begin{center}
\includegraphics[scale=0.65]{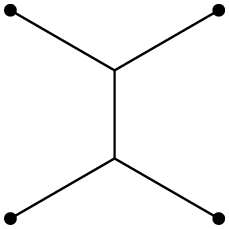} \qquad \qquad \qquad
\includegraphics[scale=0.65,angle=90]{steiner.eps}
\end{center}
Note that the symmetry of the square is broken in each individual
solution, but of course the set of both solutions retains the full
symmetry group.

It is tempting to use symmetry to help solve problems, or at least to
guess the answers, but as the Steiner tree example shows, this approach
can be misleading.  One of the most famous mistaken cases was the
Kelvin conjecture on how to divide three-dimensional space into
infinitely many equal volumes with minimal surface area between them,
to create a foam of soap bubbles.  In 1887 Kelvin conjectured a simple,
symmetrical solution, obtained by deforming a tiling of space with
truncated octahedra.  (The deformation slightly curves the hexagonal
facets into monkey saddles, so that the foam has the appropriate
dihedral angles.)  Kelvin's conjecture stood unchallenged for more than
a century, but in 1994 Weaire and Phelan found a superior solution with
two irregular types of bubbles\footnote{Their foam structure was the
inspiration for the Beijing National Aquatics Center, used in the
$2008$ Olympics.} \cite{WP}. This shows the danger of relying too much
on symmetry: sometimes it is a crucial clue as to the true optimum, but
sometimes it leads in the wrong direction.

In many cases the symmetries that are broken are as interesting as the
symmetries that are preserved.  For example, crystals preserve some of
the translational symmetries of space, but they dramatically break
rotational symmetry, as well as most translational symmetries.  This
symmetry breaking is remarkable, because it entails long-range
coordination: somehow widely separated pieces of the crystal
nevertheless align perfectly with each other.  A complete theory of
crystal formation must therefore deal with how this coordination could
come about.  Here, however, we will focus on optimization problems and
their solutions, rather than on the physical or algorithmic processes
that might lead to these solutions.

\vspace*{-6pt}

\subsection{Exceptional symmetry: $E_8$ and the Leech lattice}

Certain mathematical objects, such as the icosahedron, have always
fascinated mathematicians with their elegance and symmetry. These
objects stand out as extraordinary and have inspired much deep
mathematics (see, for example, Felix Klein's \emph{Lectures on the
Icosahedron\/} \cite{K}).  They are the sorts of objects one hopes to
characterize and understand via the genetics of the regular\break figures.

These objects are often exceptional cases in classification theorems.
In many different branches of mathematics, highly structured or
symmetric objects can be classified into several regular, predictable
families together with a handful of exceptions, such as the exceptional
Lie algebras or sporadic finite simple groups.  For most applications,
the infinite families play the leading role, and one might be tempted
to dismiss the exceptional cases as aberrations of limited importance,
specific to individual problems. Instead, although they are indeed
peculiar, the exceptional cases are not merely isolated examples, but
rather recurring themes throughout mathematics, with the same
exceptions occurring in seemingly unrelated problems. This phenomenon
has not yet been fully understood, although much is known about
particular cases.

For example, $ADE$ classifications (i.e., simply-laced Dynkin diagrams)
occur in many different mathematical areas, including finite subgroups
of the rotation group $SO(3)$, representations of quivers of finite
type, certain singularities of algebraic hypersurfaces, and simple
critical points of multivariate functions. In each case, there are two
infinite families, denoted $A_n$ and $D_n$, and three exceptions $E_6$,
$E_7$, and $E_8$, with each type naturally described by a certain
Dynkin diagram. See \cite{HHSV} for a survey.  This means $E_8$, for
example, has a definite meaning in each of these problems.  For
example, among rotation groups it corresponds to the icosahedral group,
and among simple critical points of functions from $\R^n$ to $\R$ it
corresponds to the behavior of $ x_1^3 + x_2^5 + x_3^2 + x_4^2 + \dots
+ x_{n}^2 $ at the origin.

In this survey, we focus primarily on two exceptional structures,
namely the $E_8$ root lattice in $\R^8$ and the Leech lattice in
$\R^{24}$.  These objects bring together numerous mathematical{\pagebreak} topics,
including sphere packings, finite simple groups, combinatorial and
spherical designs, error-correcting codes, lattices and quadratic
forms, mathematical physics, harmonic analysis, and even hyperbolic and
Lorentzian geometry.  They are far too rich and well connected to do
justice to here; see \cite{CS} for a much longer account as well as
numerous references. Here, we will examine how to characterize $E_8$
and the Leech lattice, as well as some of their relatives, by
optimization problems.  These objects are special because they solve
not just a single problem, but rather a broad range of problems. This
level of breadth and robustness helps explain the widespread
occurrences of these structures within mathematics.  At the same time,
it highlights the importance of understanding which problems have
extraordinarily symmetric solutions and which do not.

\subsection{Energy minimization}

Much of physics is based on the idea of energy minimization, which will
play a crucial role in this article. In many systems energy dissipates
through forces such as friction, or more generally through heat
exchange with the environment. Exact energy minimization will occur
only at zero temperature; at positive temperature, a system in contact
with a heat bath (a vast reservoir at a constant temperature, and with
effectively infinite heat capacity) will equilibrate to the temperature
of the heat bath, and its energy will fluctuate randomly, with its
expected value increasing as the temperature increases.

One can describe the behavior of such a system mathematically using
\emph{Gibbs measures}, which are certain probability distributions on
its states. For simplicity, imagine a system with $n$ different states
numbered $1$ through $n$, where state $i$ has energy $E_i$. For each
possible expected value $\bar{E}$ for energy, the corresponding Gibbs
measure is the maximal entropy probability measure constrained to have
expected energy $\bar{E}$.  In other words, it assigns probability
$p_i$ to state $i$ so that the entropy $ \sum_{i=1}^n -p_i \log p_i $
is maximized subject to $ \sum_{i=1}^n p_i E_i = \bar{E}. $  (For the
motivation behind the definition of entropy, see \cite{Kh}.)

A Lagrange multiplier argument shows that when $\min_i E_i < \bar{E} <
\max_i E_i$, the probability $p_i$ must equal $e^{-\beta
E_i}/\sum_{j=1}^n e^{-\beta E_j}$ for some constant $\beta$, where
$\beta$ is chosen so that the expected energy equals $\bar{E}$.  In
physics terms, $\beta$ is proportional to the reciprocal of
temperature, and only nonnegative values of $\beta$ are relevant
(because energy is usually not bounded above, as it is in this toy
model).  As the temperature tends to infinity, $\beta$ tends to zero
and the system will be equidistributed among all states.  As the
temperature tends to zero, $\beta$ tends to infinity, and the system
will remain in its \emph{ground states}, i.e., those with the lowest
possible energy.

In this article, we will focus on systems of point particles
interacting via a pair potential function.  In other words, the energy
of the system is the sum over all pairs of particles of some function
depending only on the relative position of the pair (typically the
distance between them).  For example, in classical electrostatics, it
is common to study identical charged particles interacting via the
Coulomb potential, i.e., with potential energy $1/r$ for a pair of
particles at distance $r$.

Many other mathematical problems can be recast in this form, even
sometimes in ways that are not immediately apparent.  For a beautiful
although tangential example, consider the distribution of eigenvalues
for a random $n \times n$ unitary matrix, chosen with respect to the
Haar measure on $U(n)$. These eigenvalues are unit complex numbers
$z_1,\dots,z_n$, and the Weyl integral formula says that the induced
probability measure on them has density proportional to
$$
\prod_{1 \le i < j \le n} |z_i - z_j|^2
$$
(see \cite{Dy}).  If we define the logarithmic potential $-\log
|z_i-z_j|$ between $z_i$ and $z_j$, then this measure is the Gibbs
measure with $\beta = 2$ for $n$ particles on the unit circle.  The
logarithmic potential is natural because it is a harmonic function on
the plane (much as the Coulomb potential $x \mapsto 1/|x|$ is harmonic
in three dimensions). Thus, the eigenvalues of a random unitary matrix
repel each other through harmonic interactions, and the Weyl integral
formula specifies the temperature $1/\beta$.

In the following survey, we will focus on the case of zero temperature.
In the real world, all systems have positive temperature, which raises
important questions about dynamics and phase transitions. However, for
the purposes of understanding the role of symmetry, zero temperature is
a crucial case.

\subsection{Packing and information theory}
\label{subsec:packinginformation}

The prototypical packing problem is sphere packing: how can one arrange
non-overlapping, congruent balls in Euclidean space to fill as large a
fraction of space as possible?  The fraction of space filled is the
\emph{density}. Of course, it must be defined by a limiting process, by
looking at the fraction of a large ball or cube that can be covered.

Packing problems fit naturally into the energy minimization framework
via \emph{hard-core potentials}, which are potentials that are infinite
up to a certain radius $r$ and zero at or beyond it. In other words,
there is an infinite energy penalty for points that are too close
together, but otherwise there is no effect. Under such a potential
function, a collection of particles has finite energy if and only if
the particles are positioned at the centers of non-overlapping balls of
radius $r/2$. Note that every packing (not just the densest) minimizes
energy, but knowing the minimal energy for all densities solves the
packing problem.

From this perspective, one can formulate questions that are even deeper
than densest packing questions.  For example, at any fixed density, one
can ask for a random packing at that density (i.e., a sample from the
Gibbs measure at zero temperature).  For which densities is there
long-range order, i.e., nontrivial correlations between distant
particles? In two or three dimensions, the densest packings are
crystalline, and there appears to be considerable order even below the
maximal density, with a phase transition between order and disorder as
the density decreases. (See \cite{L} and the references cited therein
for more details.)  It is far from clear what happens in high
dimensions, and the densest packings might be disordered \cite{TS}.

Packings of less than maximal density are of great importance for
modeling granular materials, because most such materials will be
somewhat loose. The fact that long-range order seemingly persists over
a range of densities means it can potentially be observed in the real
world, where even under high pressure no packing is ever truly perfect.
(Of course, for realistic models there are many other important
refinements, such as variation in particle sizes and\break shapes.)

In addition to being models for granular materials, packings play an
important role in information theory, as error-correcting codes for
noisy communication channels.  Suppose, for a simplified example, that
we wish to communicate by radio.  We can measure the signal strength at
$n$ different frequencies and represent it as an $n$-dimensional
vector.  Note that $n$ may be quite large, so high-dimensional packings
are especially important here.  The power required to transmit a signal
$x \in \R^n$ will be proportional to $|x|^2$, so we must restrict our
attention to signals that lie within a ball of radius $r$ centered at
the origin, where $r$ depends on the power level of our transmitter.

If we transmit a signal, then the received signal will be slightly
perturbed due to noise.  We can measure the noise level of the channel
by $\varepsilon$, so that when $x$ is transmitted, with high
probability the received signal $x'$ will satisfy $|x-x'| <
\varepsilon$.  In other words, if the open balls of radius
$\varepsilon$ about signals $x$ and $y$ do not overlap, then with high
probability the received signals $x'$ and $y'$ cannot be confused.

To ensure error-free communication, we will rely on a restricted
vocabulary of possible signals that cannot be confused with each other
(i.e., an error-correcting code). That means they must be the centers
of non-overlapping balls of radius $\varepsilon$. For efficient
communication, we wish to maximize the number of signals available for
use, i.e., the number of such balls whose centers lie within a ball of
radius $r$.  In the limit as $r/\varepsilon$ tends to infinity, that is
the sphere packing problem.

\subsection{Outline}
The remainder of the paper is organized as follows.
Sections~\ref{section:packings} and~\ref{section:thomson} survey
packing and energy minimization problems in more depth.
Sections~\ref{section:compact} and~\ref{section:euclidean} outline the
proofs that certain exceptional objects solve these problems. Finally,
Section~\ref{section:future} offers areas for future investigation.

\section{Packings and Codes} \label{section:packings}

\subsection{Sphere packing in low and high dimensions}

\begin{figure}
\vspace*{-3pt}
\begin{center}
\includegraphics[scale=0.75]{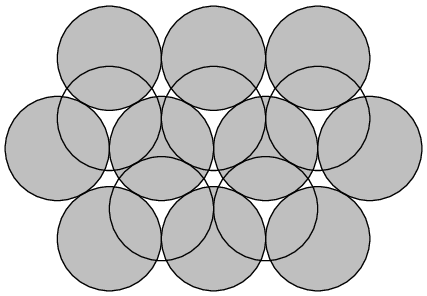}
\vspace*{-17pt}
\end{center}
\caption{Two layers in a three-dimensional sphere packing, one denoted
by shaded circles and the other by unshaded circles.  Notice that the
unshaded layer sits above half of the holes in the shaded layer.}
\label{figure:hexagonal}
\vspace*{-3pt}
\end{figure}

One can study the sphere packing problem in any dimension.  In $\R^1$
it is trivial, because the line can be completely covered with
intervals. In $\R^2$, it is easy to guess that a hexagonal arrangement
of circles is optimal, with each circle tangent to six others, but
giving a rigorous proof of optimality is not completely straightforward
and was first achieved in 1892 by Thue \cite{Th} (see \cite{H1} for a
short, modern proof).  In $\R^3$, the usual way oranges are stacked in
grocery stores is optimal, but the proof is extraordinarily difficult.
Hales completed a proof in 1998, with a lengthy combination of human
reasoning and computer calculations \cite{H}.  One conceptual
difficulty is that the solution is not at all unique in $\R^3$.  In a
technical sense, it is not unique in any dimension (even up to
isometries), because density is a global property that is unchanged by,
for example, removing a ball. However, in three dimensions there is a
much deeper sort of non-uniqueness.  One can form an optimal packing by
stacking hexagonal layers, with each layer nestled into the gaps in the
layer beneath it. As shown in Figure~\ref{figure:hexagonal}, the holes
in a hexagonal lattice consist of two translates of the original
lattice, and the next layer will sit above one of these two translates.
For each layer, a binary choice must be made, and there are uncountably
many ways to make these choices. (Each will be isometric to countably
many others, but there remain uncountably many geometrically distinct
packings, with many different symmetry groups.) All these packings are
equally dense and perfectly natural.  See \cite{CS1} for a discussion
of this issue in higher\break dimensions.

In four or more dimensions, no sharp density bounds are known. Instead,
we merely have upper and lower bounds, which differ by a substantial
factor.  For example, in $\R^{36}$, the best upper bound known is more
than $58$ times the density of the best packing known \cite{CE}.  This
factor grows exponentially with the dimension: the best lower bound
known is a constant times $n2^{-n}$ in $\R^n$ (see \cite{B} and
\cite{V}), while the upper bound is $(1.514724\ldots + o(1))^{-n}$ (see
\cite{KL}).

It may be surprising that these densities are so low.  One way to think
about it is in terms of volume growth in high dimensions.  An
$\varepsilon$-neighborhood of a ball in $\R^n$ has volume
$(1+\varepsilon)^n$ times that of the ball, so when $n$ is large, there
is far more volume near the surface of the ball than actually inside
it. In low-dimensional sphere packings, most volume is contained within
the balls, with a narrow fringe of gaps between them. In
high-dimensional packings, the gaps occupy far more volume.

It is easy to prove a lower bound of $2^{-n}$ for the sphere packing
density in $\R^n$.  In fact, this lower bound holds for every
\emph{saturated packing} (i.e., one in which there is no room for any
additional spheres):

\begin{lemma} \label{lemma:saturated}
Every saturated sphere packing in $\R^n$ has density at least $2^{-n}$.
\end{lemma}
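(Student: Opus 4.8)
The plan is to use saturation to show that doubling every radius yields a covering of $\R^n$, and then to compare volumes. Say the packing consists of balls of radius $r$, so the set $C$ of centers has all pairwise distances at least $2r$. The first step is to note that saturation means exactly that no further ball of radius $r$ can be inserted, i.e., there is no point $p \in \R^n$ with $|p - c| \ge 2r$ for every $c \in C$. Equivalently, every point of $\R^n$ lies at distance less than $2r$ from some center, so the balls of radius $2r$ about the points of $C$ cover all of $\R^n$.

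The second step converts this covering into a density bound. Fix a large cube $Q$ of side $R$. Since the radius-$2r$ balls cover $Q$, the sum of the volumes of those that meet $Q$ is at least $\vol(Q)$; all such centers lie in the cube $Q'$ obtained by enlarging $Q$ by $2r$ on each side. Each radius-$2r$ ball has volume $2^n$ times that of the corresponding radius-$r$ ball, so the radius-$r$ balls centered in $Q'$ have total volume at least $2^{-n}\vol(Q)$. Hence the fraction of $Q'$ covered by the packing is at least $2^{-n}\vol(Q)/\vol(Q')$, and since $\vol(Q')/\vol(Q) = (1 + 4r/R)^n \to 1$ as $R \to \infty$, the packing density is at least $2^{-n}$.

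There is no real obstacle here; the only point requiring a little care is the bookkeeping in the definition of density as a limit over an exhausting family of cubes. One checks that the number of centers whose radius-$r$ ball meets the boundary of $Q$ is $O(R^{n-1})$, which is negligible compared with $\vol(Q) = R^n$, so the boundary discrepancies between "centered in $Q$" and "ball meets $Q$" wash out in the limit and the bound is independent of the choice of exhaustion. The entire content of the lemma is the observation in the first step: saturation forces the doubled balls to leave no gaps.
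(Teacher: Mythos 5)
Your proposal is correct and follows essentially the same argument as the paper: saturation implies the doubled balls cover space, and a volume comparison then gives the $2^{-n}$ bound. The only difference is that you spell out the limiting argument with cubes to make the density comparison rigorous, whereas the paper states the volume-doubling step informally.
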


\begin{proof}
Suppose the packing uses spheres of radius $r$. No point in space can
be distance $2r$ or further from the nearest sphere center, since
otherwise there would be room to center another sphere of radius $r$ at
that point. This means we can cover space completely by doubling the
radius of each sphere. Doubling the radius multiplies the volume by
$2^n$, and hence multiplies the density by at most $2^n$ (in fact,
exactly $2^n$ if we count overlaps with multiplicity). Because the
enlarged spheres cover all of space, the original spheres must cover at
least a $2^{-n}$ fraction.
\end{proof}

This argument sounds highly constructive (simply add more spheres to a
packing until it becomes saturated), and indeed it is constructive in
the logical sense.  However, in practice it offers almost no insight
into what dense packings look like, because it is difficult even to
tell whether a high-dimensional packing is saturated.

In fact, it is completely unclear how to construct dense packings in
high dimensions.  One might expect the sphere packing problem to have a
simple, uniform solution that would work in all dimensions.  Instead,
each dimension has its own charming idiosyncrasies, as we will see in
Section~\ref{subsec:lattices}. There is little hope of a systematic
solution to the sphere packing problem in all dimensions. Even
achieving density $2^{-n}$ through a simple, explicit construction is
an unsolved problem.

\subsection{Lattices and periodic packings} \label{subsec:lattices}

The simplest sorts of packings are lattice packings.  Recall that a
\emph{lattice} in $\R^n$ is the integral span of a basis (i.e., it is a
grid, possibly skewed).  To form a sphere packing, one can center a
sphere at each lattice point.  The radius should be half the minimal
distance between lattice points, so that the nearest spheres are
tangent to each other.

There is no reason to expect that lattice packings should be the
densest sphere packings, and they are probably not optimal in
sufficiently high dimensions (for example, ten dimensions).  However,
lattices are very likely optimal in $\R^n$ for $n \le 9$ and for some
higher values of $n$ (including $12$, $16$, and $24$). See \cite{CS}
for more details about lattices and packings in general.

For $n \le 8$ and $n=24$, the lattice packing problem has been solved
in $\R^n$. In fact, the densest lattices are unique in these dimensions
(up to scaling and isometries), although that may not be true in every
dimension, such as $n=25$.  For $n \le 8$, the optimal lattices are all
root lattices, the famous lattices that arise in Lie theory and are
classified by Dynkin diagrams. Specifically, the densest lattices are
$A_1$ (the integer lattice), $A_2$ (the hexagonal lattice), $A_3$ (the
face-centered cubic lattice, which is also isomorphic to $D_3$), $D_4$,
$D_5$, $E_6$, $E_7$, and $E_8$.  For $n=24$, the Leech lattice is an
optimal lattice packing; the proof will be discussed in
Section~\ref{section:euclidean}.

The $D_n$ lattices are particularly simple, because they are formed by
a checkerboard construction as a sublattice of index $2$ in $\Z^n$:
$$
D_n = \{(x_1,\dots,x_n) \in \Z^n : x_1+\dots+x_n \equiv 0 \pmod{2}\}.
$$
To see why $D_n$ is not optimal in high dimensions, consider the
\emph{holes} in $D_n$, i.e., the points in space that are local maxima
for distance from the lattice. The integral points with odd coordinate
sum are obvious candidates, and they are indeed holes, at distance $1$
from $D_n$. However, there's a slightly more subtle case, namely the
point $(1/2,1/2,\dots,1/2)$ and its translates by $D_n$. These points
are at distance
$$
\sqrt{\left(\frac{1}{2}\right)^2 + \dots + \left(\frac{1}{2}\right)^2}
= \sqrt{n/4}
$$
from $D_n$.  When $n=8$, this distance becomes $\sqrt{2}$, which is
equal to the minimal distance between points in $D_8$.  That means
these deep holes have become large enough that additional spheres can
be placed in them.  Doing so yields the $E_8$ root lattice, whose
density is twice that of $D_8$.  (The $E_6$ and $E_7$ lattices are
certain cross sections of $E_8$.)

The $E_8$ and Leech lattices stand out among lattice packings, because
all the spheres fit beautifully into place in a remarkably dense and
symmetric way. There is no doubt that they are optimal packings in
general, not just among lattices.  Harmonic analysis ought to provide a
proof, but as we will see in Section~\ref{section:euclidean}, a full
proof has been elusive.

\emph{Periodic packings} form a broader class of packings than lattice
packings.  A lattice can be viewed as the vertices of a tiling of space
with parallelotopes (fundamental domains for the action by
translation), but there's no reason to center spheres only at the
vertices.  More generally, one can place them in the interior, or
elsewhere on the boundary, and then repeat them periodically; such a
packing is called a periodic packing.  Equivalently, the sphere centers
in a periodic packing form the union of finitely many translates of a
lattice.

The $E_8$ packing, as defined above, is clearly periodic (the union of
two translates of $D_8$).  It is not quite as obvious that it is
actually a lattice, but that is easy to check.  The Leech lattice in
$\R^{24}$ can be defined by a similar, but more elaborate, construction
involving filling in the holes in a lattice constructed using the
binary Golay code (see \cite{Le} and Section~4.4 in Chapter~4 of
\cite{CS}).

Philosophically, the construction of $E_8$ given above is somewhat odd,
because $E_8$ itself is extraordinarily symmetrical, but the
construction is not. Instead, it builds $E_8$ in two pieces.  This
situation is actually quite common when constructing a highly symmetric
object.  By neglecting part of the symmetry group, one can decompose
the object into simpler pieces, which can each be understood
separately. However, eventually one must exhibit the extra symmetry.
The symmetry group of $E_8$ is generated by the reflections in the
hyperplanes orthogonal to the minimal vectors of $E_8$, and one can
check that it acts transitively on those minimal vectors.

It is not known whether periodic packings achieve the maximal packing
density in every dimension.  However, they always come arbitrarily
close: given any dense packing, one can take a large, cubical piece of
it and repeat that piece periodically.  To avoid overlaps, it may be
necessary to remove some spheres near the boundary, but if the cube is
large enough, then the resulting decrease in density will be small.

By contrast, it is not even known whether there exist saturated lattice
packings in high dimensions.  If not, then lattices cannot achieve more
than half the maximal density, because one can double the density of a
non-saturated lattice by filling in a hole together with all its
translates by lattice vectors. It seems highly unlikely that there are
saturated lattices in high dimensions, because a lattice is specified
by a quadratic number of parameters, while there is an exponential
volume of space in which holes could appear, so there are not enough
degrees of freedom to control all the possible holes.  However, this
argument presumably cannot be made rigorous.

Despite all the reasons to think lattices are not the best sphere
packings in high dimensions, the best asymptotic lower bounds known for
sphere packing density use lattices. Ball's bound $2(n-1)2^{-n}$ in
$\R^n$ holds for lattice packings \cite{B}, and Vance's bound, which
improves it by an asymptotic factor of $3/e$ when $n$ is a multiple of
four, uses not just lattices, but lattices that are modules over a
maximal order in the quaternions \cite{V}.  Imposing algebraic
structure may rule out the densest possible packings, but it makes up
for that by offering powerful tools for analysis and proof.

\subsection{Packing problems in other spaces}

Packing problems are interesting in many metric spaces.  The simplest
situation is when the ambient space is compact, in which case the
packing will involve only finitely many balls.  The packing problem can
then be formulated in terms of two different optimization problems for
a finite subset of the metric space:
\begin{enumerate}
\item What is the largest possible minimal distance between $N$
    points?

\item What is the largest possible size of a subset whose minimal
    distance is at least $r$?
\end{enumerate}
The first fixes the number of balls and maximizes their size, while the
second fixes the radius $r/2$ of the balls and maximizes the number. In
Euclidean space, if we interpret the number of points as the number of
points per unit volume, then both problems are the same by scaling
invariance, but that does not hold in compact spaces. The two problems
are equivalent, however, in the sense that a complete answer to one
(for all values of $r$ or $N$) yields a complete answer to the other.

Packing problems arise naturally in many compact metric spaces,
including spheres, projective spaces, Grassmannians \cite{CHS,Ba}, and
the Hamming cube $\{0,1\}^n$ (under Hamming distance, so packings are
binary error-correcting codes). For a simplified example, suppose one
wishes to treat a spherical tumor by beaming radiation at it.  One
would like to use multiple beams approaching it from different angles,
so as to minimize radiation exposure outside of the tumor, and the
problem of maximizing the angle between the beams is a packing problem
in $\R\Ps^2$.

Packing problems are also important in non-compact spaces, but aside
from Euclidean space we will not deal with them in this article,
because defining density becomes much more subtle.  See, for example,
the foundational work by Bowen and Radin on defining packing density in
hyperbolic space \cite{BR}.

Packings on the surface of a sphere are known as spherical codes.
Specifically, an \emph{optimal spherical code} is an arrangement of
points on a sphere that maximizes the minimal distance among
configurations of its size. Spherical codes can be used as
error-correcting codes (for example, in the toy model of radio
transmission from Section~\ref{subsec:packinginformation}, they are
codes for a constant-power transmitter), and they also provide an
elegant way to help characterize the many interesting spherical
configurations that arise throughout mathematics.

One of the most attractive special cases of packing on a sphere is the
\emph{kissing problem}. How many non-overlapping unit balls can all be
tangent to a central unit ball?  The points of tangency on the central
ball form a spherical code with minimal angle at least $60^\circ$, and
any such code yields a kissing configuration.

In $\R^2$, the kissing number is clearly six, but the answer is already
not obvious in $\R^3$.  The twelve vertices of an icosahedron work, but
the tangent balls do not touch each other and can slide around.  It
turns out that there is no room for a thirteenth ball, but that was
first proved only in 1953 by Sch\"utte and van der Waerden \cite{SW}.

In $\R^4$, Musin \cite{M} showed that the kissing number is $24$, but
the answer is not known in $\R^5$ (it appears to be $40$). In fact, the
only higher dimensions for which the kissing problem has been solved
are $8$ and $24$, independently by Levenshtein \cite{Lev} and by
Odlyzko and Sloane \cite{OS}.  The kissing numbers are $240$ in $\R^8$
and $196560$ in $\R^{24}$. Furthermore, these kissing configurations
are unique up to isometries \cite{BS}.

The kissing number of $240$ is achieved by the $E_8$ root lattice
through its $240$ minimal vectors. Specifically, there are
$\binom{8}{2} \cdot 2^2 = 112$ permutations of $(\pm 1, \pm 1, 0,
\dots, 0)$ and $2^7 = 128$ vectors of the form $(\pm 1/2, \dots, \pm
1/2)$ with an even number of minus signs.  Thus, $E_8$ is not only the
densest lattice packing in $\R^8$, but it also has the highest possible
kissing number. Similarly, the Leech lattice in $\R^{24}$ achieves the
kissing number of $196560$.

In general, however, there is no reason to believe that the densest
packings will also have the highest kissing numbers.  The packing
density is a global property, while the kissing number is purely local
and might be maximized in a way that cannot be extended to a dense
packing. That appears to happen in many dimensions \cite{CS}. Instead
of being typical, compatibility between the optimal local and global
structures is a remarkable occurrence.

\section{The Thomson Problem and Universal Optimality}
\label{section:thomson}

\subsection{Physics on surfaces}

The Thomson problem \cite[p.~255]{Thomson} asks for the minimal-energy
configuration of $N$ classical electrons confined to the unit sphere
$S^2$. In other words, the particles interact via the Coulomb potential
$1/r$ at Euclidean distance $r$.  This model was originally intended to
describe atoms, before quantum mechanics or even the discovery of the
nucleus. Thomson hoped it would explain the periodic table. Of course,
subsequent discoveries have shown that it is a woefully inadequate
atomic model, but it remains of substantial scientific interest, and
its variants describe many real-world\break systems.

For example, imagine mixing together two immiscible liquids, such as
oil and water.  The oil will break up into tiny droplets, evenly
dispersed within the water, but they will rapidly coalesce and the oil
will separate from the water.  Cooks have long known that one can
prevent this separation by using emulsifiers.  One type of emulsion is
a Pickering emulsion, in which tiny particles collect on the boundaries
of oil droplets, which prevents coalescence (the particles bounce off
each other).

More generally, colloidal particles often adsorb to the interface
between two different liquids.  See, for example,
Figure~\ref{figure:chaikin}, which shows charged particles made of
polymethyl methacrylate (i.e., plexiglas) in a mixture of water and
cyclohexyl bromide. Notice that the particles on the surface of the
droplet have spread out into a fairly regular arrangement due to their
mutual repulsion, and they are repelling the remaining particles away
from the surface.

\begin{figure}
\begin{center}
\includegraphics[scale=0.35]{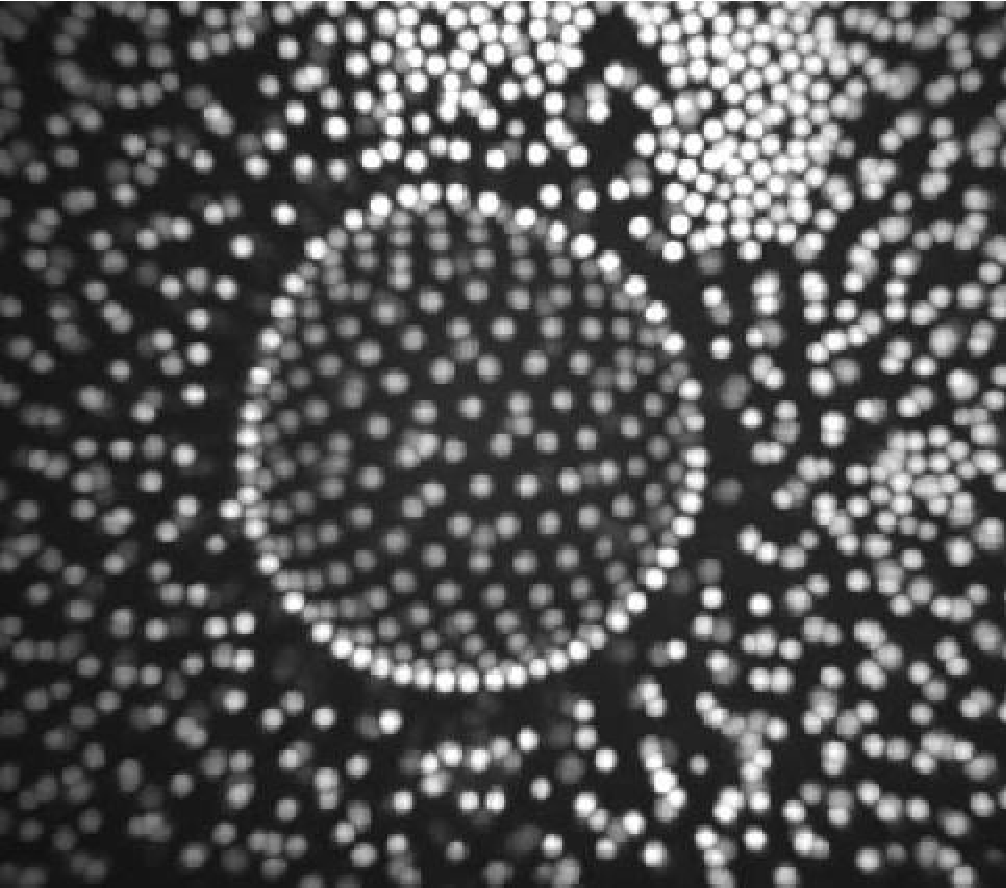}
\vspace*{-13pt}
\end{center}
\caption{Energy minimization on an actual (approximate) sphere:
tiny, electrically charged PMMA beads collecting on the interface
between water and cyclohexyl bromide. [Courtesy of W.~Irvine and
P.~M.~Chaikin, New York University]} \label{figure:chaikin}
\end{figure}

These particles are microscopic, yet large enough that they can
accurately be described using classical physics.  Thus, the generalized
Thomson problem is an appropriate model. See \cite{BG} for more details
on these sorts of materials.

Consider the case of particles on the unit sphere in $\R^n$. Given a
finite subset $\mathcal{C} \subset S^{n-1}$ and a \emph{potential
function} $f \colon (0,4] \to \R$, define the \emph{potential energy}
by
$$
E_f(\mathcal{C}) = \frac{1}{2}
\sum_{\shortstack[c]{$\scriptstyle x,y \in \mathcal{C}$\\
$\scriptstyle x \ne y$}} f\big(|x-y|^2\big).
$$
For each positive integer $N$ and each $f$, we seek an $N$-element
subset $\mathcal{C} \subset S^{n-1}$ that minimizes $E_f(\mathcal{C})$
compared to all other choices of $\mathcal{C}$ with $|\mathcal{C}|=N$.
The use of squared distance instead of distance is not standard in
physics, but it will prove mathematically convenient.  The function $f$
is defined only on $(0,4]$ because no squared distance larger than $4$
can occur on the unit sphere.

Typically $f$ will be decreasing (so the force is repulsive) and
convex.  In fact, the most natural potential functions to use are the
\emph{completely monotonic} functions, i.e., smooth functions
satisfying $(-1)^k f^{(k)} \ge 0$ for all integers $k \ge 0$.  For
example, inverse power laws $r \mapsto 1/r^s$ (with $s>0$) are
completely monotonic.

\subsection{Varying the potential function}

As we vary the potential function $f$ above, how do the optimal
configurations change?  From the physics perspective, this question
appears silly, because the potential is typically determined by
fundamental physics.  However, from a mathematical perspective it is a
critical question, because it places the individual optimization
problems into a richer context.

As we vary the potential function, the optimal configurations will vary
in some family.  This family may not be connected, because the optimum
may abruptly jump as the potential function passes some threshold, and
different components may have different dimensions \cite{CCEK}.
Nevertheless, we can use the local dimension of the family as a crude
measure of the complexity of an optimum: we compute the dimension of
the space of perturbed configurations that minimize energy for
perturbations of the potential function. Call this dimension the
\emph{parameter count} of the configuration.

\begin{figure}
\begin{center}
\includegraphics[scale=0.75]{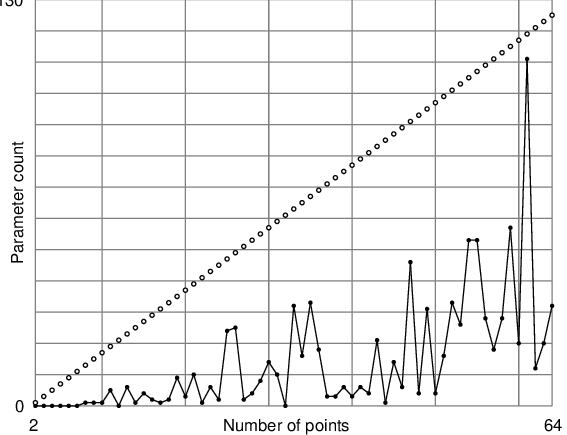}
\end{center}
\vspace*{-13pt}
\caption{Parameter counts for conjectural Coulomb-energy minimizers
on $S^2$.  For comparison, the white circles show the dimension of
the space of all configurations.}
\label{figure:parameter}
\end{figure}

Figure~\ref{figure:parameter} (taken from \cite{BBCGKS}) shows the
parameter counts for the configurations minimizing Coulomb energy on
$S^2$ with $2$ through $64$ points.  The figure is doubly conjectural:
in almost all of these cases, no proof is known that the supposed
optima are truly optimal or that the parameter counts are correct.
However, the experimental evidence leaves little doubt.

One can see from Figure~\ref{figure:parameter} that the parameter
counts vary wildly.  For example, for $43$ points there are $21$
parameters, while for $44$ points there is only $1$.  This suggests
that the $44$-point optimizer will be substantially simpler and more
understandable, and indeed it is (see \cite{BBCGKS}).

\subsection{Universal optimality}

When one varies the potential function, the simplest case is when the
optimal configuration never varies. Call a configuration
\emph{universally optimal} if it minimizes energy for all completely
monotonic potential functions.

A universal optimum is automatically an optimal spherical code: for the
potential function $f(r) = 1/r^s$ with $s$ large, the energy is
asymptotically determined by the minimal distance, and minimizing
energy requires maximizing the minimal distance.  However, optimal
spherical codes are rarely universally optimal.  For every number of
points in every dimension, there exists some optimal code, but
universal optima appear to be far less common.

In $S^1$, there is an $N$-point universal optimum for each $N$, namely
the vertices of a regular $N$-gon.  In $S^2$, the situation is more
complicated. Aside from degenerate cases with three or fewer points,
there are only three universal optima, namely the vertices of a regular
tetrahedron, octahedron, or icosahedron \cite{CK1}.  The cube and
dodecahedron are not even optimal, let alone universally optimal, since
one can lower energy by rotating a facet.

The first case for which there is no universal optimum is five points
in $S^2$. There are two natural configurations: a triangular bipyramid,
with an equilateral triangle on the equator together with the north and
south poles, and a square pyramid, with its top at the north pole and
its base slightly below the equator.  This second family depends on one
parameter, the height of the pyramid.  The triangular bipyramid is
known to minimize energy for several inverse power laws \cite{S}, but
it is not even a local minimum when they are sufficiently steep, in
which case square pyramids seem to become optimal.

\begin{conjecture}
For every completely monotonic potential function, either the
triangular bipyramid or a square pyramid minimizes energy among
five-point configurations in $S^2$.
\end{conjecture}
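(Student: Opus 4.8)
The plan is to separate the problem into a hard structural part and an easy optimization part. Because the energy $E_f(\mathcal{C})$ is linear in $f$, and because every completely monotonic function is a nonnegative superposition of exponentials (Bernstein's theorem), we may write $f(u) = \int_0^\infty e^{-\alpha u}\,d\mu(\alpha)$ with $\mu \ge 0$, so that $E_f(\mathcal{C}) = \int_0^\infty E_{g_\alpha}(\mathcal{C})\,d\mu(\alpha)$, where $g_\alpha(u) = e^{-\alpha u}$ is a Gaussian in the distance. Since near-optimal five-point configurations have all squared distances in a fixed subinterval $[a,4]$ with $a > 0$, only a bounded range of scales $\alpha$ genuinely matters. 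This reduction does not by itself prove the conjecture, since a positive combination of Gaussians each minimized by the triangular bipyramid or by a square pyramid need not be minimized by either configuration; but it shows that the only missing ingredient is a classification of the possible energy minimizers that is uniform over the (convex, hence connected) cone of admissible potentials.

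The crux, and the step I expect to be the main obstacle, is to prove that for every completely monotonic $f$ the global minimizer among five points on $S^2$ has either the $C_{4v}$ symmetry of a square pyramid or the $D_{3h}$ symmetry of the triangular bipyramid. Existence of a minimizer is immediate from compactness of $S^2$ and continuity of $E_f$, and the minimizer is a critical configuration, so the tangential force on each point vanishes. The trouble is that the space of five-point configurations modulo isometry is seven-dimensional, so one must rule out a six-parameter family of asymmetric critical candidates, showing that any non-symmetric critical configuration is either a saddle of the Hessian or strictly higher in energy. Schwartz \cite{S} carried out exactly such an analysis, by rigorous computer-assisted estimates, for the Coulomb potential and a few other inverse power laws; the open problem is to make the argument uniform in $f$. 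One natural route is to establish the classification against the bounded family of Gaussian potentials $g_\alpha$, then control a general $f$ through the superposition above, together with an interval-arithmetic certificate that no asymmetric configuration ever beats \emph{both} the bipyramid and the optimal square pyramid simultaneously; a linear-programming bound in the spirit of \cite{CK1}, tuned separately for each $\alpha$, could supply such a certificate.

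An alternative is a deformation argument anchored at the Coulomb case, where the triangular bipyramid is the proven minimizer \cite{S}. As $f$ varies through the cone of completely monotonic functions, the global minimizer traces a path with only finitely many jumps, as in the parameter-count picture of \cite{BBCGKS,CCEK}. One would show that the bipyramid stays optimal as long as it is a local minimum; that its being a local minimum is an explicit inequality on finitely many derivatives of $f$, read off from the eigenvalues of its Hessian; and that at the threshold where an eigenvalue vanishes the bifurcating branch is a square-pyramid family, with a parallel bistability analysis on the square-pyramid branch. The obstacle here is again global: local stability cannot preclude the sudden appearance of a third, unrelated configuration of lower energy, so this approach, too, must be reinforced by a global bound valid uniformly along the deformation.

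Once the structural step is available, the conclusion is routine. For a fixed $f$ one minimizes $E_f$ over the one-parameter square-pyramid family by elementary calculus in the height variable, and compares the resulting value with the explicit energy of the triangular bipyramid; whichever is smaller is the global minimum, which is precisely the statement of the conjecture.
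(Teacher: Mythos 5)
The statement you are addressing is a Conjecture in the paper, not a theorem: the paper offers no proof, and the problem remains open. Your write-up is accordingly a strategy sketch rather than a proof, and you say so yourself --- you repeatedly flag that the essential step is missing. That step, as you correctly isolate, is a classification uniform in the potential $f$ showing that any global minimizer of $E_f$ among five points on $S^2$ must have the $D_{3h}$ symmetry of the triangular bipyramid or the $C_{4v}$ symmetry of a square pyramid. Absent that, what you have is an honest diagnosis of why the conjecture is hard, not a proof, and there is no argument in the paper for you to have matched or diverged from.

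The ingredients you do assemble are sound and relevant: Bernstein's theorem writing a completely monotonic $f(u)$ as $\int_0^\infty e^{-\alpha u}\,d\mu(\alpha)$ with $\mu \ge 0$, so that $E_f$ is a nonnegative mixture of Gaussian-core energies; the count of $2\cdot 5 - 3 = 7$ dimensions for five-point configurations on $S^2$ modulo isometry; the citation of Schwartz \cite{S} for the Coulomb case; and the recognition that a nonnegative mixture of potentials each minimized within the two-family class need not itself be minimized within that class. But both routes you propose --- uniform interval-arithmetic certification over the Gaussian scale $\alpha$, or a Hessian bifurcation analysis deforming away from Coulomb --- are left as plans, and each runs into the global obstruction you yourself name: local stability and per-scale optimality do not preclude an unrelated competing configuration becoming the global minimizer. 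One further caveat on your reduction: the claim that near-optimal configurations have squared distances bounded away from zero \emph{uniformly} in $f$ is not automatic (for the degenerate completely monotonic choice $f \equiv 1$ every configuration is a minimizer), so restricting to a bounded range of scales $\alpha$ needs either a nondegeneracy hypothesis or a separate compactness argument. None of this is a criticism of your reasoning so much as confirmation that you have located the genuine difficulty; the conjecture remains open precisely because the structural step you highlight has not been carried out.
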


\begin{table}
\tabcolsep7.3pt
\vspace*{-10pt}
\caption{The known $N$-point universal optima in $S^{n-1}$.}
\label{table:univopt}
\begin{center}
\vspace*{-3pt}
\begin{tabular}{@{}c|c|c@{}}
\hline
$n$ & $N$ & Description\\
\hline
$n$ & $N \le n+1$ & regular simplex\\
$n$ & $2n$ & regular cross polytope\\
$2$ & $N$ & regular $N$-gon\\
$3$ & $12$ & regular icosahedron\\
$4$ & $120$ & regular $600$-cell\\
$5$ & $16$ & hemicube\\
$6$ & $27$ & Schl\"afli graph\\
$7$ & $56$ & $28$ equiangular lines\\
$8$ & $240$ & $E_8$ root system\\
$21$ & $112$ & isotropic subspaces\\
$21$ & $162$  & strongly regular graph\\
$22$ & $100$  & Higman-Sims graph\\
$22$ & $275$  & McLaughlin graph\\
$22$ & $891$  & isotropic subspaces\\
$23$ & $552$  & $276$ equiangular lines\\
$23$ & $4600$  & iterated kissing configuration\\
$24$ & $196560$& Leech lattice minimal vectors\\
$q(q^3+1)/(q+1)$ & $(q+1)(q^3+1)$ & isotropic subspaces ($q$ is a prime
power)\\
\hline
\end{tabular}
\end{center}
\vspace*{-10pt}
\end{table}

For $n \ge 4$, the universal optima in $S^{n-1}$ have not been
completely classified.  Table~\ref{table:univopt} shows a list of the
known cases (proved in \cite{CK1}). Each of them is a fascinating
mathematical object.  For example, the $27$ points in $S^5$ correspond
to the $27$ lines on a cubic surface.

The first five lines in the table list the regular polytopes with
simplicial facets.  The next four lines list the $E_8$ root system and
certain semiregular polytopes obtained as cross sections. The next
eight lines list the minimal vectors of the Leech lattice and certain
cross sections.  If this were the complete list, it would feel
reasonable, but the last line is perplexing.  It describes another
infinite sequence of universal optima, constructed from geometries over
$\F_q$ in \cite{CGS} and recognized as optimal codes in \cite{Lev2}.
How many more such cases remain to be constructed?

Another puzzling aspect of Table~\ref{table:univopt} is the gap between
$8$ and $21$ dimensions.  Are there really no universal optima in these
dimensions, aside from the simplices and cross polytopes?  Or do we
simply lack the imagination needed to discover them?  Extensive
computer searches \cite{BBCGKS} suggest that the table is closer to
complete than one might expect, but probably not complete.
Specifically, there are a $40$-point configuration in $S^9$ and a
$64$-point configuration in $S^{13}$ that appear to be universally
optimal, but these are the only conjectural cases that have been
located.

Almost all of the results tabulated in Table~\ref{table:univopt} can be
deduced from the following theorem.  It generalizes a theorem of
Levenshtein \cite{Lev2}, which says that these configurations are all
optimal codes.  The one known case not covered by the theorem is the
regular $600$-cell, which requires a different argument \cite{CK1}.

To state the theorem, we will need two definitions. A \emph{spherical
$k$-design} in $S^{n-1}$ is a finite subset $\mathcal{D}$ of the sphere
such that for every polynomial $p \colon \R^n \to \R$ of total degree
at most $k$, the average of $p$ over $\mathcal{D}$ equals its average
over the entire sphere.  Spherical $k$-designs can be thought of as
sets giving quadrature rules (i.e., numerical integration schemes) that
are exact for polynomials of degree up to $k$. An \emph{$m$-distance
set} is a set for which $m$ distances occur between distinct points.

\begin{theorem}[Cohn and Kumar \cite{CK1}] \label{theorem:univopt}
Every $m$-distance set that is a spherical $(2m-1)$-design is
universally optimal.
\end{theorem}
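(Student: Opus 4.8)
The plan is to establish universal optimality through a linear programming (Delsarte--Yudin) bound on the sphere that is exactly tight for the configuration in question. Write $\mathcal{C}_0$ for the given $m$-distance spherical $(2m-1)$-design, put $N=|\mathcal{C}_0|$, and let $u_1<\dots<u_m$ in $[-1,1)$ be the inner products $\langle x,y\rangle$ that occur between distinct points of $\mathcal{C}_0$, so the squared distances that occur are $2-2u_i$. Fix a completely monotonic potential $f$ and set $g(u)=f(2-2u)$; since $g^{(k)}(u)=2^{k}(-1)^{k}f^{(k)}(2-2u)\ge 0$, the function $g$ is absolutely monotonic on $[-1,1)$. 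I would invoke two standard facts about the ultraspherical polynomials $C^{(n)}_k$, normalized so that $C^{(n)}_k(1)=1$: they are positive definite, meaning $\sum_{x,y\in\mathcal{C}}C^{(n)}_k(\langle x,y\rangle)\ge 0$ for every finite $\mathcal{C}\subset S^{n-1}$ (by the addition formula this sum equals $\frac{1}{\dim\mathcal{H}_k}\sum_i\big(\sum_x Y_i(x)\big)^2$ for an orthonormal basis $Y_i$ of degree-$k$ spherical harmonics); and the fact that, $\mathcal{C}_0$ being a $(2m-1)$-design, $\sum_{x,y\in\mathcal{C}_0}C^{(n)}_k(\langle x,y\rangle)=0$ for $1\le k\le 2m-1$.

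Now the bound. Suppose $h=\sum_{k\ge 0}c_kC^{(n)}_k$ is a polynomial with $c_k\ge 0$ for all $k\ge 1$ and $h(u)\le g(u)$ for every $u\in[-1,1)$. Then for any $N$-point configuration $\mathcal{C}\subset S^{n-1}$,
\[
2E_f(\mathcal{C})=\sum_{x\ne y}g(\langle x,y\rangle)\ge\sum_{x\ne y}h(\langle x,y\rangle)=\sum_{x,y}h(\langle x,y\rangle)-Nh(1)\ge c_0N^2-Nh(1),
\]
the last step using $c_k\ge 0$ together with positive definiteness. Hence $E_f(\mathcal{C})\ge\frac{1}{2}(c_0N^2-Nh(1))$ for every $\mathcal{C}$. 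If in addition $\deg h\le 2m-1$ and $h(u_i)=g(u_i)$ for all $i$, then for $\mathcal{C}=\mathcal{C}_0$ both inequalities in the display become equalities: the first because $h$ agrees with $g$ at every inner product that actually occurs, the second because $\deg h\le 2m-1$ and $\mathcal{C}_0$ is a $(2m-1)$-design. So the bound is exactly $E_f(\mathcal{C}_0)$, and $\mathcal{C}_0$ minimizes $E_f$. Everything therefore reduces to producing, for each completely monotonic $f$, such an auxiliary polynomial $h$.

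The candidate is the osculatory (Hermite) interpolant: let $h$ be the unique polynomial of degree at most $2m-1$ with $h(u_i)=g(u_i)$ and $h'(u_i)=g'(u_i)$ for $i=1,\dots,m$ --- a total of $2m$ conditions. The degree bound and the value conditions hold by construction. The inequality $h\le g$ follows from absolute monotonicity via the Hermite remainder: $g(u)-h(u)=\frac{g^{(2m)}(\xi)}{(2m)!}\prod_{i=1}^{m}(u-u_i)^2$ for some $\xi$ in the convex hull of $u,u_1,\dots,u_m$, and $g^{(2m)}\ge 0$ by absolute monotonicity. The one remaining point --- and the \emph{main obstacle} --- is that the Gegenbauer coefficients $c_1,\dots,c_{2m-1}$ of this particular $h$ are nonnegative.

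To handle coefficient positivity I would linearize and then exploit the special nodes. From $a_j:=g^{(j)}(0)/j!=2^{j}(-1)^{j}f^{(j)}(2)/j!\ge 0$ we get $g(u)=\sum_{j\ge 0}a_j u^j$ on $[-1,1)$ with $a_j\ge 0$, and since osculatory interpolation at fixed nodes and Gegenbauer expansion are both linear it suffices to show the degree-$(2m-1)$ interpolant $h_j$ of $u^j$ at $u_1,\dots,u_m$ has nonnegative $C^{(n)}_k$-coefficients for every $j$. For $j\le 2m-1$ one has $h_j=u^j$, and powers of $u$ expand with nonnegative ultraspherical coefficients (a classical consequence of the recurrence $uC^{(n)}_k=\alpha_kC^{(n)}_{k+1}+\beta_kC^{(n)}_{k-1}$ with $\alpha_k,\beta_k>0$). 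For $j\ge 2m$ the nodes must genuinely be special, and this is exactly where the $(2m-1)$-design hypothesis does work beyond merely killing the high-degree design sums: combining the $m$-distance and $(2m-1)$-design conditions forces the distribution of $\langle x,y\rangle$ over ordered pairs in $\mathcal{C}_0$ --- supported on $\{u_1,\dots,u_m\}$ together with a point mass at $1$ --- to be an extremal, Gauss--Radau-type quadrature rule for the ultraspherical weight $(1-u^2)^{(n-3)/2}$, which pins the $u_i$ down as the roots of a specific orthogonal polynomial; one then needs a lemma to the effect that osculatory interpolation, at the roots of such a polynomial, of an absolutely monotonic function produces nonnegative Gegenbauer coefficients. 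I expect this lemma to be the technical heart of the whole proof (it is essentially a refinement of Levenshtein's construction of the extremal polynomials underlying his optimal-code bounds). Once it is available, the LP bound is simultaneously sharp for $\mathcal{C}_0$ against every completely monotonic potential, which is precisely universal optimality; taking $f(r)=r^{-s}$ and letting $s\to\infty$ recovers Levenshtein's theorem that these configurations are optimal codes.
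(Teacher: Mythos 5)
Your proposal follows essentially the same route as the paper: the Delsarte--Yudin linear programming bound, with the auxiliary function taken to be the degree-$(2m-1)$ Hermite interpolant of the potential at the $m$ inner products, tightness forced by the $(2m-1)$-design property, the inequality $h\le g$ from the Hermite remainder formula and complete monotonicity, and the nonnegativity of the Gegenbauer coefficients of $h$ correctly identified as the technical heart (which the paper likewise does not prove here, deferring to Cohn--Kumar). Your sketch of how that last step goes --- reducing to absolutely monotonic building blocks and exploiting the special structure of the nodes --- is a fair description of what the cited proof actually requires.
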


The proof of this theorem uses linear programming bounds, which are
developed in the next section.

\section{Proof Techniques: Linear Programming Bounds}
\label{section:compact}

\subsection{Constraints on the pair correlation function}
\label{subsec:constr}

In this section, we will discuss techniques for proving lower bounds on
potential energy.  In particular, we will develop linear programming
bounds and briefly explain how they are used to prove
Theorem~\ref{theorem:univopt}.

They are called ``linear programming bounds'' because linear
programming can be used to optimize them, but no knowledge of linear
programming is required to understand how the bounds work.  They were
originally developed by Delsarte for discrete problems in coding theory
\cite{D}, extended to continuous packing problems in \cite{DGS,KL}, and
adapted for potential energy minimization by Yudin and his
collaborators \cite{Y,KY1,KY2,A1,A2}. In this section, we will focus on
spherical configurations, although the techniques work in much greater
generality.

Given a finite subset $\mathcal{C}$ of $S^{n-1}$, define its
\emph{distance distribution} by
$$
A_t = \big|\{(x,y) \in \mathcal{C}^2 : \langle x,y \rangle = t\}\big|,
$$
where $\langle \cdot,\cdot \rangle$ denotes the inner product in
$\R^n$.  In physics terms, $A$ is the pair correlation function; it
measures how often each pairwise distance occurs (the inner product is
a natural way to gauge distance on the sphere). Linear programming
bounds are based on proving certain linear inequalities involving the
numbers $A_t$. These inequalities are crucial because the potential
energy can be expressed in terms of the distance distribution $A$ by
\begin{equation} \label{eq:Eflinear}
E_f(\mathcal{C}) = \frac{1}{2}
\sum_{\shortstack[c]{$\scriptstyle x,y \in \mathcal{C}$\\
$\scriptstyle x \ne y$}} f\big(|x-y|^2\big) = \sum_{-1 \le t < 1}
\frac{f(2-2t)}{2} A_t,
\end{equation}
since $|x-y|^2 = 2 - 2 \langle x,y \rangle$.  (Although
\eqref{eq:Eflinear} sums over uncountably many values of $t$, only
finitely many of the summands are nonzero.) Energy is a linear function
of $A$, and the linear programming bound is the minimum of this
function subject to the linear constraints on $A$, which makes it the
solution to a linear programming problem in infinitely many variables.

To begin, there are several obvious constraints on the distance
distribution.  Let $N = |\mathcal{C}|$.  Then $A_t \ge 0$ for all $t$,
$A_t=0$ for $|t|>1$, $A_1 = N$, and $ \sum_t A_t = N^2. $

The power of linear programming bounds comes from less obvious
constraints.  For example, $\sum_t A_t t \ge 0.$ To see why, notice
that
$$
\sum_t A_t t = \sum_{x,y \in \mathcal{C}} \langle x,y \rangle =
\left|\sum_{x \in \mathcal{C}} x\right|^2 \ge 0.
$$
More generally, there is an infinite sequence of polynomials
(independent of $\mathcal{C}$, but depending on the dimension $n$)
$P^n_0, P^n_1, P^n_2, \dots$, with $\deg P^n_k = k$, such that for each
$k$,
\begin{equation} \label{eq:nonnegsum}
\sum_t A_t P^n_k(t) \ge 0.
\end{equation}
(In fact, we can take $P_0^n(t)=1$, $P_1^n(t)=t$, and
$P_2^n(t)=t^2-1/n$.) This inequality is nontrivial, because these
polynomials are frequently negative. For example, $P^3_{12}$ looks like
this:
\begin{center}
\includegraphics[scale=0.75]{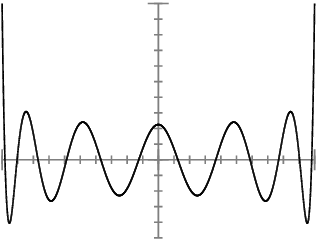}
\end{center}

The polynomials $P^n_k$ are called \emph{ultraspherical polynomials},
and they are characterized by orthogonality on the interval $[-1,1]$
with respect to the measure $(1-t^2)^{(n-3)/2} \, dt$.  In other words, for $i \ne
j$,
$$
\int_{-1}^1 P^n_i(t) P^n_j(t) \, (1-t^2)^{(n-3)/2} \, dt = 0.
$$
This relationship determines the polynomials up to scaling, as the
Gram-Schmidt orthogonalization of the monomials $1,t,t^2,\dots$ with
respect to this inner product. The sign of the scaling constant is
determined by $P^n_k(1)
> 0$, and the magnitude of the constant is irrelevant for
\eqref{eq:nonnegsum}.

In fact, these polynomials have a far stronger property than just
\eqref{eq:nonnegsum}: they are positive-definite kernels.  That is, for
any $N$ and any points $x_1,\dots,x_N \in S^{n-1}$, the $N \times N$
matrix $\big(P^n_k(\langle x_i, x_j \rangle)\big)_{1 \le i,j\le N}$ is
positive semidefinite.  This implies \eqref{eq:nonnegsum} because the
sum of the entries of a positive-semidefinite matrix is nonnegative.
Schoenberg \cite{Sch} proved that every continuous positive-definite
kernel on $S^{n-1}$ must be a nonnegative linear combination of
ultraspherical polynomials.

\subsection{Zonal spherical harmonics}

As an illustration of the role of representation theory, in this
section we will derive the ultraspherical polynomials as zonal
spherical harmonics and verify that they satisfy \eqref{eq:nonnegsum}.
The reader who is willing to take that on faith can skip the
derivation.

The orthogonal group $O(n)$ acts on $S^{n-1}$ by isometries, and hence
$L^2\big(S^{n-1}\big)$ is a unitary representation of $O(n)$.  To
begin, we will decompose this representation into irreducibles.  Let
$\Pol_k$ be the subspace of functions on $S^{n-1}$ defined by
polynomials on $\R^n$ of total degree at most $k$.  We have $\Pol_0
\subset \Pol_1 \subset \cdots$, and each $\Pol_k$ is a
finite-dimensional representation of $O(n)$, with $\bigcup_k \Pol_k$
dense in $L^2\big(S^{n-1}\big)$. To convert this filtration into a
direct sum decomposition, let $V_0=\Pol_0$ and define $V_k$ to be the
orthogonal complement of $V_0 \oplus V_1 \oplus \dots \oplus V_{k-1}$
within $\Pol_k$ (with respect to the usual inner product on
$L^2\big(S^{n-1}\big)$). Then $V_k$ is still preserved by $O(n)$, and
the entire space breaks up as
$$
L^2\big(S^{n-1}\big) = \mathop{\widehat{\bigoplus}}\limits_{k \ge 0} V_k.
$$
(The hat indicates the completion of the algebraic direct sum.) The
functions in $V_k$ are known as \emph{spherical harmonics} of degree
$k$, because $V_k$ is an eigenspace of the spherical Laplacian, but we
will not need that characterization of them.

For each $x \in S^{n-1}$, evaluating at $x$ defines a linear map $f
\mapsto f(x)$ on $V_k$.  Thus, there exists a unique vector $v_{k,x}
\in V_k$ such that for all $f \in V_k$,
$$
f(x) = \langle f, v_{k,x} \rangle,
$$
where $\langle \cdot,\cdot \rangle$ denotes the inner product on $V_k$
from $L^2\big(S^{n-1}\big)$. The map $x \mapsto v_{k,x}$ is called a
\emph{reproducing kernel}.

For each $T \in O(n)$ and $f \in V_k$,
$$
\langle f, v_{k,Tx}\rangle = f(Tx) = (T^{-1}f)(x) =
\langle T^{-1} f, v_{k,x} \rangle = \langle f, T v_{k,x} \rangle.
$$
Thus, $T v_{k,x} = v_{k,Tx}$, by the uniqueness of $v_{k,Tx}$.  It
follows that $v_{k,x}$ is invariant under the stabilizer of $x$ in
$O(n)$.  In other words, it is invariant under rotations about the axis
through $\pm x$, so it is effectively a function of only one variable,
the inner product with $x$.  Such a function is called a \emph{zonal
spherical harmonic}.

We can define $P^n_k$ by
$$
v_{k,x}(y) = P^n_k(\langle x,y \rangle).
$$
These polynomials certainly satisfy \eqref{eq:nonnegsum}, because
$$
\sum_{x,y \in \mathcal{C}} P^n_k(\langle x,y \rangle) =
\sum_{x,y \in \mathcal{C}} v_{k,x}(y) =
\sum_{x,y \in \mathcal{C}} \langle v_{k,x}, v_{k,y} \rangle =
\left|\sum_{x \in \mathcal{C}} v_{k,x}\right|^2 \ge 0,
$$
and in fact they are positive-definite kernels because
$\big(P^n_k(\langle x_i, x_j \rangle)\big)_{1 \le i,j\le N}$ is the
Gram matrix of the vectors $v_{k,x_i}$.

The functions $v_{0,x}, v_{1,x}, \dots$ are in orthogonal subspaces,
and hence the polynomials $P^n_0, P^n_1, \dots$ must be orthogonal with
respect to the measure on $[-1,1]$ obtained by projecting the surface
measure of $S^{n-1}$ onto the axis from $-x$ to $x$. The following
simple calculation shows that the measure is proportional to
$(1-t^2)^{(n-3)/2}\, dt$.  Consider the spherical shell defined by
$$
1 \le x_1^2 + \dots + x_n^2 \le 1+\varepsilon.
$$
If we set $x_1 = t$, then the remaining coordinates satisfy
$$
1-t^2 \le x_2^2 + \dots + x_n^2 \le 1-t^2 + \varepsilon,
$$
and the volume is proportional to $(1-t^2+\varepsilon)^{(n-1)/2} -
(1-t^2)^{(n-1)/2}$.  If we divide by $\varepsilon$ to normalize, then
as $\varepsilon \to 0$ we find that the density of the surface measure
with $x_1=t$ is proportional to $(1-t^2)^{(n-3)/2}$, as desired.

The degree of $P_k^n$ is at most $k$, and because $v_{k,x}$ is
orthogonal to $\Pol_{k-1}$, the degree can be less than $k$ only if
$P_k^n$ is identically zero.  That cannot be the case (for $n>1$),
since otherwise evaluating at $x$ would be identically zero.  If it
were, then it would follow from $T v_{k,x} = v_{k,Tx}$ that evaluating
at each point is identically zero, and thus that $V_k$ is trivial.
However, $\Pol_k \ne \Pol_{k-1}$, and hence $V_k$ is nontrivial.

Thus, the polynomials $P^n_k$ defined above have degree $k$, satisfy
\eqref{eq:nonnegsum}, and have the desired orthogonality relationship.

\subsection{Linear programming bounds}

Let $\mathcal{C} \subset S^{n-1}$ be a finite subset and let $A$ be its
distance distribution.  To make use of the linear constraints on $A$
discussed in Section~\ref{subsec:constr}, we will use the dual linear
program. In other words, we will take linear combinations of the
constraints so as to obtain a lower bound on energy.

We introduce new real variables $\alpha_k$ and $\beta_t$ specifying
which linear combination to take. Suppose we add $\alpha_0$ times
$\sum_t A_t = N^2$, $\alpha_k$ times
$$
\sum_{-1 \le t \le 1} A_t P^n_k(t) \ge 0
$$
(with $\alpha_k \ge 0$ for $k \ge 1$), and $\beta_t$ times the
constraint $A_t \ge 0$ (with $\beta_t \ge 0$ for $-1 \le t < 1$).  We
find that
$$
\sum_{-1 \le t \le 1} A_t \sum_k
\alpha_k P_k^n(t) + \sum_{-1 \le t < 1} A_t \beta_t \ge \alpha_0 N^2,
$$
using the normalization $P^n_0(t)=1$. Define $h(t) = \sum_k \alpha_k
P_k^n(t)$. Then
$$
\sum_{-1 \le t < 1} A_t \big(h(t) + \beta_t\big) \ge \alpha_0 N^2 - h(1) N,
$$
because $A_1 = N$. If we choose $\alpha_k$ and $\beta_t$ so that $h(t)
+ \beta_t = f(2-2t)/2$ for $-1 \le t < 1$, then the energy will be
bounded below by $\alpha_0 N^2 - h(1) N$, by \eqref{eq:Eflinear}.

The equation $h(t) + \beta_t = f(2-2t)/2$ just means that $h(t) \le
f(2-2t)/2$ (because we have assumed only that $\beta_t \ge 0$).  Thus,
we have proved the following bound:

\begin{theorem}[Yudin \cite{Y}] \label{theorem:yudin}
Suppose $h(t) = \sum_k \alpha_k P_k^n(t)$ satisfies $\alpha_k \ge 0$
for $k > 0$ and $h(t) \le f(2-2t)/2$ for $-1 \le t < 1$.  Then for
every finite subset $\mathcal{C} \subset S^{n-1}$,
$$
E_f(\mathcal{C}) \ge \alpha_0 |\mathcal{C}|^2 - h(1) |\mathcal{C}|.
$$
\end{theorem}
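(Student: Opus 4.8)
The plan is to follow exactly the linear-combination-of-constraints argument sketched in the paragraphs preceding the statement, organizing it into a clean derivation. First I would recall the basic setup: fix a finite $\mathcal{C} \subset S^{n-1}$ with $N = |\mathcal{C}|$ and distance distribution $A_t$, and record the constraints established in Section~\ref{subsec:constr}, namely $A_t \ge 0$ for all $t$, $A_t = 0$ for $|t| > 1$, $A_1 = N$, $\sum_t A_t = N^2$, and $\sum_t A_t P^n_k(t) \ge 0$ for each $k \ge 0$ (inequality \eqref{eq:nonnegsum}). I would also recall from \eqref{eq:Eflinear} that $E_f(\mathcal{C}) = \sum_{-1 \le t < 1} \tfrac{1}{2} f(2-2t)\, A_t$, so energy is a nonnegative linear functional of the $A_t$ restricted to $t \in [-1,1)$.

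Next I would form the dual combination. Multiply the constraint $\sum_t A_t P^n_k(t) \ge 0$ by $\alpha_k \ge 0$ for each $k \ge 1$, add $\alpha_0$ times the equality $\sum_t A_t = N^2$ (here using $P^n_0 \equiv 1$ so this is the $k=0$ term with an unconstrained sign), and add $\beta_t \ge 0$ times $A_t \ge 0$ for $-1 \le t < 1$. Summing gives
$$
\sum_{-1 \le t \le 1} A_t \sum_k \alpha_k P^n_k(t) + \sum_{-1 \le t < 1} A_t \beta_t \ge \alpha_0 N^2.
$$
Writing $h(t) = \sum_k \alpha_k P^n_k(t)$, splitting off the $t=1$ term (where $A_1 = N$ and the $\beta$-sum does not reach), and rearranging yields
$$
\sum_{-1 \le t < 1} A_t\big(h(t) + \beta_t\big) \ge \alpha_0 N^2 - h(1) N.
$$
Now impose the matching condition $h(t) + \beta_t = \tfrac{1}{2} f(2-2t)$ for $-1 \le t < 1$; since $\beta_t \ge 0$ is the only restriction on $\beta_t$, this is exactly the hypothesis $h(t) \le \tfrac{1}{2} f(2-2t)$, and given such an $h$ one simply sets $\beta_t = \tfrac{1}{2} f(2-2t) - h(t) \ge 0$. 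Substituting and invoking \eqref{eq:Eflinear} identifies the left-hand side with $E_f(\mathcal{C})$, giving $E_f(\mathcal{C}) \ge \alpha_0 N^2 - h(1) N$, which is the claim.

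There is no serious obstacle here — the result is essentially a bookkeeping consequence of the already-established inequalities \eqref{eq:nonnegsum} and the linear expansion \eqref{eq:Eflinear} — but the one point deserving care is the handling of the $t = 1$ value. The polynomial-positivity constraints \eqref{eq:nonnegsum} are summed over all $t$ including $t=1$, whereas the energy sum \eqref{eq:Eflinear} and the variables $\beta_t$ run only over $t \in [-1,1)$ (since $f$ is defined on $(0,4]$ and the diagonal pairs $x=y$ are excluded). I would therefore be explicit that the contribution of $A_1 = N$ to $\sum_{-1\le t\le 1} A_t h(t)$ is exactly $h(1) N$, and that this term is what gets moved to the right-hand side. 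One should also note, as parenthesized in the text, that although the sums formally range over uncountably many values of $t$, only finitely many $A_t$ are nonzero, so all manipulations are finite sums and no convergence issue arises. With these remarks in place the proof is complete.
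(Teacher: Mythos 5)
Your proposal is correct and follows essentially the same linear-combination-of-constraints derivation that the paper itself gives in the paragraphs immediately preceding the theorem statement, including the separation of the $t=1$ term via $A_1 = N$ and the observation that choosing $\beta_t = f(2-2t)/2 - h(t) \ge 0$ is equivalent to the hypothesis $h(t) \le f(2-2t)/2$. The care you take with the $t=1$ boundary and the finiteness of the sums is appropriate but does not constitute a departure from the paper's argument.
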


To prove Theorem~\ref{theorem:univopt}, one can optimize the choice of
the auxiliary function $h$ in Theorem~\ref{theorem:yudin}. Suppose
$\mathcal{C}$ is an $m$-distance set and a spherical $(2m-1)$-design,
and $f$ is completely monotonic.  In the proof of
Theorem~\ref{theorem:yudin}, equality holds if and only if $h(t) =
f(2-2t)/2$ for every inner product $t<1$ that occurs between points in
$\mathcal{C}$ and $\sum_{x,y \in \mathcal{C}} P^n_k(\langle x,y
\rangle) = 0$ whenever $\alpha_k>0$ and $k>0$.  The latter equation
automatically holds for $1 \le k \le 2m-1$ because $\mathcal{C}$ is a
$(2m-1)$-design.  Let $h$ be the unique polynomial of degree at most
$2m-1$ that agrees with $f(2-2t)/2$ to order $2$ at each of the $m$
inner products between distinct points in $\mathcal{C}$, so that $h$
satisfies the other condition for equality. The inequality $h(t) \le
f(2-2t)/2$ follows easily from a remainder theorem for Hermite
interpolation (using the complete monotonicity of $f$). The most
technical part of the proof is the verification that the coefficients
$\alpha_k$ of $h$ are nonnegative. For any single configuration, it can
be checked directly; for the general case, see \cite{CK1}.

\subsection{Semidefinite programming bounds}

Semidefinite programming bounds, introduced by Schrijver \cite{Schr}
and generalized by Bachoc and Vallentin \cite{BV}, extend the idea of
linear programming bounds by looking at triple (or even higher)
correlation functions, rather than just pair correlations.  Linear
constraints are naturally replaced with semidefinite constraints, and
the resulting bounds can be optimized by semidefinite programming.

This method is a far-reaching generalization of linear programming
bounds, and it has led to several sharp bounds that could not be
obtained previously \cite{BV2,CWoo}. However, the  improvement in the
bounds when going from pairs to triples is often small, while the
computational price is high. One of the most interesting conceptual
questions in this area is the trade-off between higher correlations and
improved bounds. When studying $N$-point configurations in $S^{n-1}$
using $k$-point correlation bounds, how large does $k$ need to be to
prove a sharp bound?  Clearly $k=N$ would suffice, and for the cases
covered by Theorem~\ref{theorem:univopt} it is enough to take $k=2$.
Aside from a handful of cases in which $k=3$ works, almost nothing is
known in between.  (Cases with $k \ge 4$ seem too difficult to handle
computationally.) This question is connected more generally to the
strength of LP and SDP hierarchies for relaxations of NP-hard
combinatorial optimization problems \cite{Lau}.

It is also related to a conjecture of Torquato and Stillinger
\cite{TS}, who propose that for packings that are disordered (in a
certain technical sense), in sufficiently high dimensions the two-point
constraints are not only necessary but also sufficient for the
existence of a packing with a given pair correlation function. They
show that this conjecture would lead to packings of density
$(1.715527\ldots+o(1))^{-n}$ in $\R^n$, by exhibiting the corresponding
pair correlation functions. The problem of finding a hypothetical pair
correlation function that maximizes the packing density, subject to the
two-point constraints, is dual to the problem of optimizing the linear
programming bounds.

\section{Euclidean Space} \label{section:euclidean}

\subsection{Linear programming bounds in Euclidean space}

Linear programming bounds can also be applied to packing and energy
minimization problems in Euclidean space, with Fourier analysis taking
the role played by the ultraspherical polynomials in the spherical
case. In this section, we will focus primarily on packing, before
commenting on energy minimization at the end.  The theory is formally
analogous to that in compact spaces, but the resulting optimization
problems are quite a bit deeper and more subtle, and the most exciting
applications of the theory remain conjectures.

We will normalize the Fourier transform of an $L^1$ function $f \colon
\R^n \to \R$ by
$$
\widehat{f}(t) = \int_{\R^n} f(x) e^{2\pi i \langle t,x \rangle } \, dx.
$$
(In this section, $f$ will not denote a potential function.) The
fundamental technical tool is the Poisson summation formula for a
lattice $\Lambda$, which holds for all Schwartz functions (i.e., smooth
functions all of whose derivatives are rapidly decreasing):
$$
\sum_{x \in \Lambda} f(x) =
\frac{1}{\vol(\R^n/\Lambda)} \sum_{t \in \Lambda^*} \widehat{f}(t).
$$
Here, $\vol(\R^n/\Lambda)$ is the volume of a fundamental
parallelotope, and $\Lambda^*$ is the dual lattice defined by
$$
\Lambda^* = \{ t \in \R^n : \langle t,x \rangle \in \Z
\textup{ for all $x \in \Lambda$}\}.
$$
Given any basis of $\Lambda$, the dual basis with respect to $\langle
\cdot, \cdot \rangle$ is a basis of $\Lambda^*$.

\begin{theorem}[Cohn and Elkies \cite{CE}] \label{theorem:euclidean}
Let $f \colon \R^n \to \R$ be a Schwartz function such that
$\widehat{f}(0) \ne 0$. If $f(x) \le 0$ for $|x| \ge 1$ and
$\widehat{f}(t) \ge 0$ for all $t$, then the sphere packing density in
$\R^n$ is at most
$$
\frac{\pi^{n/2}}{2^n(n/2)!} \cdot \frac{f(0)}{\widehat{f}(0)}.
$$
\end{theorem}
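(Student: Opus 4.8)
The plan is to prove the bound first for lattice packings, then for periodic packings, and finally to pass to arbitrary packings by approximation. Since density is scale-invariant, I may assume throughout that the packing has minimal distance exactly $1$, so that it is built from balls of radius $1/2$; writing $B$ for such a ball, $\vol(B) = \pi^{n/2}/\big(2^n (n/2)!\big)$, and the density of a periodic packing with $N$ translates of a lattice $\Lambda$ is $N\vol(B)/\vol(\R^n/\Lambda)$. Note also that $f(0) = \int \widehat{f} > 0$, because $\widehat{f} \ge 0$ and $\widehat{f}(0) \ne 0$, and likewise $\widehat{f}(0) > 0$; these positivity facts are what let the inequalities below point in the right direction.

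For a lattice packing by $\Lambda$, I would apply Poisson summation to $f$. On the left, $\sum_{x \in \Lambda} f(x) = f(0) + \sum_{0 \ne x \in \Lambda} f(x) \le f(0)$, since every nonzero vector of $\Lambda$ has length at least $1$ and $f \le 0$ there. On the right, $\frac{1}{\vol(\R^n/\Lambda)} \sum_{t \in \Lambda^*} \widehat{f}(t) \ge \widehat{f}(0)/\vol(\R^n/\Lambda)$, since $\widehat{f} \ge 0$. Comparing the two sides gives $\vol(\R^n/\Lambda) \ge \widehat{f}(0)/f(0)$, hence density $\vol(B)/\vol(\R^n/\Lambda) \le \vol(B)\, f(0)/\widehat{f}(0)$, which is exactly the stated bound.

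For a periodic packing whose centers form $\bigcup_{j=1}^N (v_j + \Lambda)$, the key device is to sum over pairs of translates and then apply Poisson summation, which produces
$$
\sum_{j,k=1}^N \sum_{x \in \Lambda} f(x + v_j - v_k) = \frac{1}{\vol(\R^n/\Lambda)} \sum_{t \in \Lambda^*} \widehat{f}(t) \left|\sum_{j=1}^N e^{2\pi i \langle t, v_j \rangle}\right|^2.
$$
The right-hand side is at least $N^2\widehat{f}(0)/\vol(\R^n/\Lambda)$, keeping only the $t = 0$ term since the rest are nonnegative. On the left, the $N$ terms with $j = k$ and $x = 0$ contribute $N f(0)$, while every other term has $|x + v_j - v_k| \ge 1$, because distinct centers are at distance at least $1$, so $f$ is nonpositive there; thus the left-hand side is at most $N f(0)$. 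Combining, $\vol(\R^n/\Lambda) \ge N\widehat{f}(0)/f(0)$, so the density is again at most $\vol(B)\, f(0)/\widehat{f}(0)$.

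Finally, I would invoke the fact noted earlier in the paper that periodic packings approach the optimal density arbitrarily closely (take a large cube of an arbitrary packing, delete a thin boundary layer, and repeat periodically, losing only a negligible fraction of the density); the bound for periodic packings therefore implies it in general. The technically delicate points are minor: one must justify the use of Poisson summation, which is precisely why $f$ is taken to be Schwartz, so that both sides converge absolutely and the identity holds, together with the small observations that $f(0)$ and $\widehat{f}(0)$ are strictly positive. The only genuinely clever step is the pairing trick in the periodic case, which is what turns the Fourier side into the manifestly nonnegative squared exponential sum; everything else is a direct estimate.
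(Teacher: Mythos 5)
Your proof is correct and takes essentially the same route as the paper: Poisson summation for lattices, the pairing trick yielding the nonnegative factor $\bigl|\sum_j e^{2\pi i \langle t, v_j\rangle}\bigr|^2$ for periodic packings, and approximation of arbitrary packings by periodic ones. The paper states this exact strategy but defers the periodic-case computation to \cite{CE}; your write-up correctly supplies those details.
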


Of course, $(n/2)!$ means $\Gamma(n/2+1)$ when $n$ is odd.  The
restriction to Schwartz functions can be replaced with milder
assumptions \cite{CE,CK1}.

The hypotheses and conclusion of Theorem~\ref{theorem:euclidean} are
invariant under rotation about the origin, so without loss of
generality we can symmetrize $f$ and assume it is a radial function.
Thus, optimizing the bound in Theorem~\ref{theorem:euclidean} amounts
to optimizing the choice of a function of one (radial) variable.

It is not hard to prove Theorem~\ref{theorem:euclidean} for the special
case of lattice packings.  Suppose $\Lambda$ is a lattice, and rescale
so we can assume the minimal vector length is $1$ (i.e., the packing
uses balls of radius $1/2$).  The density is the volume of a sphere of
radius $1/2$, which is $\pi^{n/2}/(2^n(n/2)!)$, times the number of
spheres occurring per unit volume in space.  The latter factor is
$1/\vol(\R^n/\Lambda)$, because there is one sphere for each
fundamental cell of the lattice, and hence the density equals
$$
\frac{\pi^{n/2}}{2^n(n/2)!} \cdot \frac{1}{\vol(\R^n/\Lambda)}.
$$

Now we apply Poisson summation to see that
$$
\sum_{x \in \Lambda} f(x) =
\frac{1}{\vol(\R^n/\Lambda)} \sum_{t \in \Lambda^*} \widehat{f}(t).
$$
The left side is bounded above by $f(0)$, because all the other terms
come from $|x| \ge 1$ and are thus nonpositive by assumption.  The
right side is bounded below by $\widehat{f}(0)/\vol(\R^n/\Lambda)$,
because all the other terms are nonnegative.  Thus,
$$
f(0) \ge
\frac{\widehat{f}(0)}{\vol(\R^n/\Lambda)},
$$
which is equivalent to the
density bound in Theorem~\ref{theorem:euclidean}.

The proof in the general case is completely analogous.  It suffices to
prove the bound for all periodic packings (because they come
arbitrarily close to the maximal density), and one can apply a version
of Poisson summation for summing over translates of a lattice. See
\cite{CE} for the details, as well as for an explanation of the analogy
between these linear programming bounds and those for compact spaces.

\subsection{Apparent optimality of $E_8$ and the Leech lattice}

Theorem~\ref{theorem:euclidean} does not explain how to choose the
function $f$, and for $n>1$ the optimal choice of $f$ is unknown.
However, one can use numerical methods to optimize the density bound,
for example by choosing $f(x)$ to be $e^{-\pi|x|^2}$ times a polynomial
in $|x|^2$ (so that the Fourier transform can be easily computed) and
then optimizing the choice of the polynomial.  For $4 \le n \le 36$,
the results were collected in Table~3 of \cite{CE}, and in each case
the bound is the best one known, but they are typically nowhere near
sharp. For example, when $n=36$, the upper bound is roughly $58.2$
times the best packing density known.  That was an improvement on the
previous bound, which was off by a factor of $89.7$, but the gap
remains enormous.

However, for $n=2$, $8$, or $24$, Theorem~\ref{theorem:euclidean}
appears to be sharp:

\begin{conjecture}[Cohn and Elkies \cite{CE}] \label{conjecture:euclidean}
For $n=2$, $8$, or $24$, there exists a function $f$ that proves a
sharp bound in Theorem~\ref{theorem:euclidean} (for the hexagonal,
$E_8$, or Leech lattice, respectively).
\end{conjecture}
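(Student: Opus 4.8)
The plan is to reduce Conjecture~\ref{conjecture:euclidean} to an explicit construction problem and then to attack that construction with modular forms. First I would pin down exactly what a sharp function must look like. Both $E_8$ and the Leech lattice are even unimodular, hence equal to their own duals, and (after an irrelevant rescaling) the squared lengths of their nonzero vectors are $2,4,6,\dots$ for $E_8$ and $4,6,8,\dots$ for the Leech lattice. Tracing through the two inequalities in the Poisson summation proof of Theorem~\ref{theorem:euclidean}, equality forces $f$ to vanish at every nonzero point of $\Lambda$ and $\widehat f$ to vanish at every nonzero point of $\Lambda^*=\Lambda$. Since $f\le 0$ beyond the minimal distance and $\widehat f\ge 0$ everywhere, these zeros must be double zeros, except that $f$ may have only a simple zero at the minimal distance itself, which sits on the boundary of the region where $f\le 0$. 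So for $n=8$ the task becomes: exhibit a radial Schwartz function $f$ on $\R^8$ with $f(0)>0$ and $\widehat f(0)>0$, with $f$ and $\widehat f$ carrying the prescribed (double) zeros at the radii $\sqrt2,\sqrt4,\sqrt6,\dots$, with the correct signs in between, and with $f(0)/\widehat f(0)$ equal to the value making the bound of Theorem~\ref{theorem:euclidean} coincide with the density of $E_8$; and similarly for $n=24$ with the Leech lattice. The planar case is different in kind, since the hexagonal lattice is not unimodular and its dual is only a rotated rescaling of itself.

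Second, I would exploit the self-duality of $E_8$ and the Leech lattice: the conditions on $f$ and on $\widehat f$ are then mirror images, so it is natural to look for $f$ inside the $\pm 1$ eigenspaces of the Fourier transform, writing $f=f_++f_-$ with $\widehat{f_\pm}=\pm f_\pm$ and imposing the sign and vanishing conditions on each piece. The pattern of forced double zeros at squared-radii in $2\Z_{>0}$ is exactly the vanishing one sees in the $q$-expansion of a modular form (with $q=e^{2\pi i z}$ and the relevant variable essentially $e^{\pi i |x|^2 z}$), and the theta series of $E_8$ is the weight-$4$ Eisenstein series while the Leech theta series is built from weight-$12$ forms. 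This points toward constructing each eigenfunction as an integral transform
$$
f_\pm(x)=\int_0^\infty \psi_\pm(it)\, e^{-\pi |x|^2 t}\,dt
$$
(or a contour-integral version), where $\psi_\pm$ is assembled from weakly holomorphic modular forms of the appropriate weight and level, chosen so that the behavior of $\psi_\pm$ under $t\mapsto 1/t$ produces exactly the Fourier eigenvalue $\pm1$, and so that the pole structure of $\psi_\pm$ at the cusps forces the required double zeros of $f_\pm$ and makes $f(0)$ and $\widehat f(0)$ come out right. Part of the work at this stage is reverse-engineering the precise modular data — weight, level, principal parts — from the dimension, the locations of the zeros, and the self-duality, with no a priori guarantee that a consistent choice exists.

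The step I expect to be the genuine obstacle is verifying the inequalities $f(x)\le 0$ beyond the minimal distance and $\widehat f(t)\ge 0$ everywhere — not merely at the forced zeros, but at every radius. Writing the candidate as an integral of modular forms does not by itself make these sign conditions visible; proving them amounts to showing that certain explicit (quasi)modular integrands have a definite sign along the positive imaginary axis, a delicate analytic fact that must be extracted from the structure of the forms (positivity of Fourier coefficients, contour deformation, and careful estimates near the finitely many places where the crude bounds fail). Finally, the case $n=2$ falls outside this self-dual framework, since there is no even unimodular lattice in dimension $2$, and it would require a separate, asymmetric construction; absent such a function, it is at present not even formally known whether the bound of Theorem~\ref{theorem:euclidean} is sharp in the plane.
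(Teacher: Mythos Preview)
The statement you are attempting is a \emph{conjecture} in the paper, not a theorem: the paper does not prove it and offers no proof sketch. What the paper supplies is (i) numerical evidence (the bound is sharp to within a factor of $1+1.65\cdot10^{-30}$ for $n=24$), (ii) the observation that equality in the Poisson summation argument forces $f$ to vanish on $\Lambda\setminus\{0\}$ and $\widehat f$ to vanish on $\Lambda^*\setminus\{0\}$, and (iii) the Cohn--Miller numerical hint that certain Taylor coefficients of the optimal $f$ appear to be rational (Table~\ref{table:coeffs}). There is therefore no proof in the paper against which your proposal can be compared.

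Viewed on its own terms, your proposal is a coherent research program rather than a proof, and it is well aimed. The deduction of the forced double zeros (with a possible simple zero at the minimal radius), the reduction via self-duality to the $\pm1$ Fourier eigenspaces, the Ansatz of building each eigenfunction as a Laplace-type transform of a weakly holomorphic (quasi)modular form whose behavior under $t\mapsto1/t$ supplies the eigenvalue, and the identification of the global sign conditions $f\le0$ and $\widehat f\ge0$ as the genuine analytic obstacle are all correct diagnoses. This is essentially the route by which the cases $n=8$ and $n=24$ were subsequently settled (Viazovska; Cohn--Kumar--Miller--Radchenko--Viazovska), well after the paper under review. You are also right that $n=2$ sits outside the self-dual modular-form framework; that case of Conjecture~\ref{conjecture:euclidean} remains open.
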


The strongest numerical evidence comes from \cite{CK2}: for $n=24$ the
bound is sharp to within a factor of $1 + 1.65 \cdot 10^{-30}$. Similar
accuracy can be obtained for $n=8$ or $n=2$, although only $10^{-15}$
was reported in \cite{CK2}. Of course, for $n=2$ the sphere packing
problem has already been solved, but
Conjecture~\ref{conjecture:euclidean} is open.

This apparent sharpness is analogous to the sharpness of the linear
programming bounds for the kissing number in $\R^2$, $\R^8$, and
$\R^{24}$. In that problem, it would have sufficed to prove any bound
less than the answer plus one, because the kissing number must be an
integer, but the bounds in fact turn out to be exact integers.  In the
case of the sphere packing problem, the analogous exactness is needed
(because packing density is not quantized), and fortunately it appears
to be true.

Examining the proof of Theorem~\ref{theorem:euclidean} gives simple
conditions for when the bound can be sharp for a lattice $\Lambda$,
analogous to the conditions for Theorem~\ref{theorem:yudin}: $f$ must
vanish at each nonzero point in $\Lambda$ and $\widehat{f}$ must vanish
at each nonzero point in $\Lambda^*$.  In fact, the same must be true
for all rotations of $\Lambda$, so $f$ and $\widehat{f}$ must vanish at
these radii (even if they are not radial functions). Unfortunately, it
seems difficult to control the behavior of $f$ and $\widehat{f}$
simultaneously.

For the special case of lattices, however, it is possible to complete a
proof.

\begin{theorem}[Cohn and Kumar \cite{CK2}] \label{theorem:leech}
The Leech lattice is the unique densest lattice in $\R^{24}$, up to
scaling and isometries.
\end{theorem}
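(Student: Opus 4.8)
The plan is to prove that the Leech lattice $\Lambda_{24}$ is the unique densest lattice in $\R^{24}$ by combining the linear programming bound of Theorem~\ref{theorem:euclidean} with a finite search through the known classification of $24$-dimensional lattices. The key realization is that, although we cannot produce an exactly optimal function $f$, we can produce an \emph{explicit} Schwartz function $f$ that proves a bound on the sphere packing density in $\R^{24}$ which is extraordinarily close to the density of the Leech lattice --- close enough that any lattice exceeding the Leech density by even a tiny amount is excluded, while any lattice within that tiny window can be ruled out by other means.

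First I would construct, following the numerical recipe sketched after Theorem~\ref{theorem:euclidean}, a radial function $f(x) = p(|x|^2) e^{-\pi|x|^2}$ with $p$ a polynomial, chosen so that $f(x) \le 0$ for $|x| \ge \sqrt{2}$ (the Leech minimal distance, after normalization), $\widehat f(t) \ge 0$ everywhere, and $f(0)/\widehat f(0)$ is as small as we can make it. The function is designed to have double roots at the vector lengths $\sqrt{2}, \sqrt{4}, \sqrt{6}, \dots$ occurring in $\Lambda_{24}$ and its dual (which equals $\Lambda_{24}$ itself, since the Leech lattice is unimodular), so that the Poisson summation argument is nearly tight. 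Crucially, to get a \emph{rigorous} bound one must verify the sign conditions on $f$ and $\widehat f$ with certainty rather than numerically; I would handle this by taking $p$ with algebraic (or rational) coefficients and controlling the finitely many sign changes of $f$ on $[\sqrt{2},\infty)$ and of $\widehat f$ on $[0,\infty)$ via interval arithmetic or by exhibiting explicit factorizations and tail estimates. This yields a density upper bound of the form $(1+\epsilon)$ times the Leech density with $\epsilon$ on the order of $10^{-30}$, as reported from \cite{CK2}.

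Next, I would invoke the classification of Niemeier lattices and, more generally, the enumeration of unimodular and low-determinant lattices in dimension $24$: any lattice $\Lambda$ in $\R^{24}$ with density exceeding that of $\Lambda_{24}$ would, after rescaling to minimal norm $4$, have covolume less than that of the Leech lattice, hence bounded determinant, and would lie in an explicitly finite list. For each candidate other than $\Lambda_{24}$ one computes the density and checks it is strictly smaller --- in fact smaller by a margin vastly larger than $\epsilon$ --- so the LP bound, combined with the classification, forces $\Lambda = \Lambda_{24}$. Uniqueness up to isometry and scaling then follows because the Leech lattice is the unique lattice realizing its own density in the list, a fact already implicit in the Niemeier classification (Leech is the unique even unimodular $24$-dimensional lattice with no roots).

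The main obstacle I expect is the rigorous verification of the inequalities $f(x) \le 0$ for $|x|\ge\sqrt 2$ and $\widehat f(t)\ge 0$ for all $t$: the function proving a bound within $10^{-30}$ of sharpness necessarily has its sign changes packed very delicately near the forced double roots, so naive floating-point checking is inadequate and one needs either a provably correct certificate (e.g.\ a sum-of-squares or explicit nonnegative-combination representation of $\widehat f$ in terms of the relevant positive-definite kernels) or carefully managed rigorous numerics. A secondary difficulty is making the passage from ``density bound" to ``finite list of lattices" fully rigorous --- one must ensure the relevant enumeration of $24$-dimensional lattices of bounded covolume and minimal norm is complete, which ultimately rests on the Niemeier classification and its refinements. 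Once those two technical points are secured, the logical structure of the proof is short: an explicit function plus a finite check.
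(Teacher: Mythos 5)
Your first step --- constructing an explicit auxiliary function $f$ and rigorously certifying the sign conditions so that Theorem~\ref{theorem:euclidean} yields a density bound within a factor of $1 + 10^{-30}$ of the Leech density --- matches what Cohn and Kumar actually do, and your concern about rigorous numerics versus floating-point is well placed. But the second half of your plan has a genuine gap: there is no ``explicitly finite list'' of lattices in $\R^{24}$ of bounded covolume and fixed minimal norm. The moduli space of $24$-dimensional lattices (up to scaling and isometry) is a positive-dimensional real manifold; fixing the minimal norm and bounding the determinant cuts out a compact but still continuous family, not a finite set. The Niemeier classification enumerates only the \emph{even unimodular} lattices in $\R^{24}$, a discrete and very special subclass, and a competitor lattice with density exceeding the Leech density has no reason to be even, unimodular, integral, or indeed anything enumerable. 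So ``a finite check through the classification'' is not available.

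What the paper actually does instead of a global enumeration is a local argument. The LP bound via Poisson summation controls more than just the density: if a lattice has density within $\epsilon$ of the Leech lattice, the near-equality in Poisson summation forces its vector lengths (and those of its dual) to cluster tightly around the Leech vector lengths $\sqrt{4}, \sqrt{6}, \sqrt{8}, \dots$, because the auxiliary function $f$ and its transform $\widehat f$ vanish only at those radii and are strictly signed elsewhere. This quantitative rigidity is then used to show that any lattice achieving nearly the optimal density must be close (in the natural metric on the space of lattice Gram matrices) to the Leech lattice itself, and one concludes by showing that the Leech lattice is a strict local optimum among lattices, so nothing nearby can tie or beat it. Thus the finite check you want is replaced by a local perturbation analysis around a single point of moduli space. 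You should rethink the second half of your argument along these lines; the enumeration route cannot work because the set of potential competitors is uncountable.
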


The proof uses Theorem~\ref{theorem:euclidean} to show that no sphere
packing in $\R^{24}$ can be more than slightly denser than the Leech
lattice, and that every lattice as dense as the Leech lattice must be
very close to it.  However, the Leech lattice is a locally optimal
packing among lattices, and the bounds can be made close enough to
complete the proof. This approach also yields a new proof of optimality
and uniqueness for $E_8$ (previously shown in \cite{Bl} and \cite{Ve}).

One noteworthy hint regarding the optimal functions $f$ in $\R^8$ and
$\R^{24}$ is an observation of Cohn and Miller \cite{CM} about the
Taylor series coefficients of $f$.  It is more convenient to use the
rescaled function $g(x) = f(x/r)$, where $r = \sqrt{2}$ when $n=8$ and
$r=2$ when $n=24$. Then $g(0) = \widehat{g}(0)$, and without loss of
generality let this value be $1$. Assuming $g$ is radial, we can view
$g$ and $\widehat{g}$ as functions of one variable and ask for their
Taylor series coefficients.  Only even exponents occur by radial
symmetry, so the first nontrivial terms are quadratic. Cohn and Miller
noticed that the quadratic coefficients appear to be rational numbers,
as shown in Table~\ref{table:coeffs}. The quartic terms seem more
subtle, and it is not clear whether they are rational as well. If they
are, then their denominators are probably much larger.

\begin{table}
\tabcolsep8.5pt
\vspace*{-10pt}
\caption{Approximate Taylor series coefficients of $g$ and
$\widehat{g}$ about $0$.} \label{table:coeffs}
\begin{center}
\begin{tabular}{@{}ccccc@{}}
\hline
$n$ & function & order & coefficient & conjecture\\
\hline $8$ & $g$ & $2$ & $-2.7000000000000000000000000\dots$ & $-27/10$\\
$8$ & $\widehat{g}$ & $2$ & $-1.5000000000000000000000000\dots$ & $-3/2$\\
$24$ & $g$ & $2$ & $\phantom{-}2.6276556776556776556776556\dots$ & $14347/5460$\\
$24$ & $\widehat{g}$ & $2$ & $\phantom{-}1.3141025641025641025641025\dots$ & $205/156$\\
$8$ & $g$ & $4$ & $\phantom{-}4.2167501240968298210999141\dots$ & ?\\
$8$ & $\widehat{g}$ & $4$ & $-1.2397969070295980026220772\dots$ & ?\\
$24$ & $g$ & $4$ & $\phantom{-}3.8619903167183007758184168\dots$ & ?\\
$24$ & $\widehat{g}$ & $4$ & $\phantom{-}0.7376727789015322303799539\dots$ & ?\\
\hline
\end{tabular}
\end{center}
\vspace*{-10pt}
\end{table}

More generally, one can study not just the sphere packing problem, but
also potential energy minimization in Euclidean space.  The total
energy of a periodic configuration will be infinite, because each
distance occurs infinitely many times, but one can instead try to
minimize the average energy per particle.  Some of the densest packings
minimize more general forms of energy, but others do not, and
simulations lead to many intriguing structures \cite{CKS}.

Cohn and Kumar \cite{CK1} proved linear programming bounds for energy
and made a conjecture analogous to
Conjecture~\ref{conjecture:euclidean}:

\begin{conjecture}[Cohn and Kumar \cite{CK1}]
\label{conjecture:eucunivopt} For $n=2$, $8$, or $24$, the linear
programming bounds for potential energy minimization in $\R^n$ are
sharp for every completely monotonic potential function (for the
hexagonal, $E_8$, or Leech lattice, respectively).
\end{conjecture}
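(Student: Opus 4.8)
This is a conjecture, so what follows is a plan of attack modeled on the route to Conjecture~\ref{conjecture:euclidean} and to Theorem~\ref{theorem:univopt}. The first move is a pair of reductions. By Bernstein's theorem every completely monotonic $f$ has the form $f(u) = \int_0^\infty e^{-tu}\, d\mu(t)$ with $\mu \ge 0$, so $f(|x|^2)$ is a nonnegative superposition of Gaussians $x \mapsto e^{-t|x|^2}$; since energy is a linear functional of the potential, it suffices to show that the relevant lattice $\Lambda$ (hexagonal, $E_8$, or Leech) minimizes Gaussian energy at its own density for every $t > 0$. Gaussians decay rapidly, so the energy sums converge absolutely and no renormalization is needed. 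Next, one reduces to periodic configurations, which approximate arbitrary configurations of a given density in the relevant sense, exactly as in the packing case. Thus the goal becomes: for each $t > 0$, produce an auxiliary function whose Euclidean linear programming bound for the potential $e^{-t|x|^2}$ is sharp for $\Lambda$.

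Second, one needs the bound itself and its sharpness conditions. Applying Poisson summation to a periodic packing in the same way that Theorem~\ref{theorem:euclidean} is obtained yields the Cohn--Kumar linear programming bound for energy: a radial Schwartz function $g$ with $\widehat g \ge 0$ everywhere and $g(x) \le e^{-t|x|^2}$ for $|x|$ at least the minimal distance of $\Lambda$ gives a lower bound on energy per particle depending only on $g(0)$, $\widehat g(0)$, and the density. Inspecting the proof, the bound is sharp for $\Lambda$ precisely when $g$ agrees with $e^{-t|x|^2}$ to second order at every nonzero vector length of $\Lambda$ and $\widehat g$ vanishes to second order at every nonzero vector length of $\Lambda^*$ --- the conditions analogous to those following Theorem~\ref{theorem:yudin}. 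For $E_8$ and the Leech lattice $\Lambda^* = \Lambda$ (and for the hexagonal lattice $\Lambda^*$ is a rescaled rotation of $\Lambda$), and the squared vector lengths form an arithmetic progression of even integers, which is the structural feature that makes the construction conceivable.

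The heart of the plan is to build, for each $t$, a radial function $g_t$ satisfying those interpolation conditions, and then to check the two sign conditions. For the construction I would look for a Fourier interpolation formula: a basis of radial functions $a_j$, $b_j$ indexed by the vector lengths of $\Lambda$ whose values, derivatives, and Fourier transforms have the prescribed behavior at the lattice and dual-lattice radii, so that $g_t = \sum_j e^{-t r_j^2} a_j + \sum_j c_j(t)\, b_j$ automatically interpolates the Gaussian on the $\Lambda$ side and annihilates $\widehat{g_t}$ on the $\Lambda^*$ side. Because the squared lengths of $E_8$ and the Leech lattice are governed by modular forms of weight $n/2$, and Gaussians are the archetypal modular objects under Poisson summation, the natural source of such basis functions is the theory of modular and quasimodular forms; the rationality observed by Cohn and Miller (Table~\ref{table:coeffs}) and the near-perfect numerics behind Conjecture~\ref{conjecture:euclidean} make it very plausible that these functions exist. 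Once $g_t$ is in hand, the inequality $g_t(x) \le e^{-t|x|^2}$ outside the minimal radius should follow from the complete monotonicity of $u \mapsto e^{-tu}$ via a Hermite-interpolation remainder estimate, just as in the spherical proof of Theorem~\ref{theorem:univopt}. The inequality $\widehat{g_t} \ge 0$ everywhere is the analogue of the positivity $\alpha_k \ge 0$ in Theorem~\ref{theorem:yudin}, and I expect it --- together with the very existence of an interpolation basis carrying the correct Fourier-side signs, uniformly in $t$ --- to be the main obstacle: there is no general technique forcing Fourier positivity of a function assembled this way, and finding a modular construction that makes $\widehat{g_t}$ manifestly a nonnegative combination of Gaussians is precisely what remains open.
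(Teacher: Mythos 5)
You correctly recognize that this is a conjecture, not a theorem: the paper offers no proof, and indeed states explicitly that the claim ``is not even known in the two-dimensional case.'' So there is no proof in the paper to compare against, and what you have written is (appropriately) a plan of attack rather than an argument. That said, your roadmap is faithful to the framework the paper describes: Bernstein's theorem reduces to Gaussian potentials, Poisson summation over periodic configurations gives the Euclidean linear programming bound of Cohn and Kumar, sharpness forces double-order interpolation of the Gaussian at the radii of $\Lambda$ together with double-order vanishing of the Fourier transform at the radii of $\Lambda^*$, and the remaining work is to build a two-sided Fourier interpolation basis and then verify the two sign conditions. Your instinct that modular forms of weight $n/2$ are the natural source of such a basis, with the Fourier positivity $\widehat{g_t}\ge 0$ as the crux, is exactly the kind of input the paper gestures at via the Cohn--Miller rationality observations.

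One technical slip worth fixing: you require $g(x)\le e^{-t|x|^2}$ only for $|x|$ at least the minimal distance of $\Lambda$. For the \emph{packing} bound (Theorem~\ref{theorem:euclidean}) this is the right domain, because the hard-core constraint guarantees no smaller distances occur. For the \emph{energy} bound there is no such constraint --- you must beat every periodic configuration of the given density, including ones with arbitrarily close pairs --- so the hypothesis has to be $g(x)\le e^{-t|x|^2}$ for all $x\ne 0$, in parallel with the spherical requirement $h(t)\le f(2-2t)/2$ on all of $[-1,1)$ in Theorem~\ref{theorem:yudin}. This makes the Hermite-interpolation inequality genuinely global rather than localized past the first shell. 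A second, smaller imprecision: the squared vector lengths of the hexagonal lattice are $0,1,3,4,7,\dots$, not an arithmetic progression, so the ``even-integer arithmetic progression'' structure you lean on is specific to $E_8$ and (with the single omission of $2$) the Leech lattice; a treatment of $n=2$ would have to use a different arithmetic input. Neither point damages the outline, but both matter if one tries to carry it out.
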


This universal optimality would be a dramatic strengthening of mere
optimality as packings.  It is not even known in the two-dimensional
case.

\section{Future Prospects} \label{section:future}

The most pressing question raised by this work is how to prove that the
hexagonal lattice, $E_8$, and the Leech lattice are universally optimal
in Euclidean space.  Linear programming bounds reduce this problem to
finding certain auxiliary functions of one variable, and the optimal
functions can even be computed to high precision, but so far there is
no proof that they truly exist.

More generally, can we classify the universal optima in a given space?
No proof is known even that the list of examples in $S^3$ is complete,
although it very likely is.  Each of the known universal optima is such
a remarkable mathematical object that a classification would be highly
desirable: if there are any others out there, we ought to find them.

One noteworthy case is equiangular line configurations in complex
space.  Do there exist $n^2$ unit vectors $x_1,\dots,x_{n^2} \in \C^n$
such that for $i \ne j$, $|\langle x_i, x_j \rangle|^2$ is independent
of $i$ and $j$ (in which case one can show it must be $1/(n+1)$)?  In
other words, the complex lines through these vectors are equidistant
under the Fubini-Study metric in $\C\Ps^{n-1}$.  Zauner \cite{Z}
conjectured that the answer is yes for all $n$, and substantial
numerical evidence supports that conjecture \cite{RBSC}, but only
finitely many cases have been proved. A collection of $n^2$ vectors
with this property gives an $n^2$-point universal optimum in
$\C\Ps^{n-1}$, by Theorem~8.2 in \cite{CK1}.  This case is particularly
unusual, because normally the difficulty is in proving optimality for a
configuration that has already been constructed, rather than
constructing one that has already been proved optimal (should it
exist).

These equiangular line configurations are in fact closely analogous to
Hadamard matrices.  They can be characterized as exactly the simplices
in $\C\Ps^{n-1}$ that are projective $2$-designs (where a simplex is
simply a set of points for which all pairwise distances are equal).
Similarly, Hadamard designs, which are an equivalent variant of
Hadamard matrices \cite{AK}, are symmetric block $2$-designs that are
simplices under the Hamming distance between blocks.  The existence of
Hadamard matrices of all orders divisible by four is a famous unsolved
problem in combinatorics, and perhaps the problem of $n^2$ equiangular
lines in $\C^n$ will be equally difficult.

These two problems are finely balanced between order and disorder.  Any
Hadamard matrix or equiangular line configuration must have
considerable structure, but in practice they frequently seem to have
just enough structure to be tantalizing, without enough to guarantee a
clear construction.  This contrasts with many of the most symmetrical
mathematical objects, which are characterized by their symmetry groups:
once you know the full group and the stabilizer of a point, it is often
not hard to deduce the structure of the complete object.  That seems
not to be possible in either of these two problems, and it stands as a
challenge to find techniques that can circumvent this difficulty.

In conclusion, packing and energy minimization problems exhibit greatly
varying degrees of symmetry and order in their solutions.  In certain
cases, the solutions are extraordinary mathematical objects such as
$E_8$ or the Leech lattice.  Sometimes this can be proved, and
sometimes it comes down to simply stated yet elusive conjectures.  In
other cases, the solutions may contain defects or involve unexpectedly
complicated structures.  Numerical experiments suggest that this is the
default behavior, but it is difficult to predict exactly when or how it
will occur. Finally, in rare cases there appears to be order of an
unusually subtle type, as in the complex equiangular line problem, and
this type of order remains a mystery.

\section*{Acknowledgments}

I am grateful to James Bernhard, Tom Brennan, Tzu-Yi Chen, Donald Cohn,
Noam Elkies, Abhinav Kumar, Achill Sch\"urmann, Sal Torquato, Frank
Vallentin, Stephanie Vance, Jeechul Woo, and especially Nadia Heninger
for their valuable feedback on this paper.

\end{document}